\newtheorem{theorem}{Theorem}
\newtheorem{corollary}[theorem]{Corollary}
\newtheorem{lemma}[theorem]{Lemma}
\newtheorem{proposition}[theorem]{Proposition}
\theoremstyle{definition}
\newtheorem{remark}[theorem]{Remark}
\newtheorem{example}[theorem]{Example}
\newtheorem{definition}[theorem]{Definition}
  \DeclareMathOperator{\ii}{i} 
 \DeclareMathOperator{\const}{const} \DeclareMathOperator{\IM}{Im}
\DeclareMathOperator{\tr}{tr} \DeclareMathOperator{\RE}{Re}
\DeclareMathOperator{\diag}{diag} \DeclareMathOperator{\W}{W}
\DeclareMathOperator{\MP}{MP}
\newcommand{\x}{\star}
\newcommand{\Real}{\mathbb{R}}
\newcommand{\Comp}{\mathbb{C}}
\newcommand{\Nat}{\mathbb{N}}
\newcommand{\Ex}{\mathbb{E}}
\newcommand{\Prob}{\mathbb{P}}
\newcommand{\eps}{\varepsilon}
\newcommand{\dts}{,\dots,}
\newcommand{\sbs}{\subset}
\newcommand{\set}[1]{\left\{#1\right\}}
\newcommand{\norm}[1]{\left\Vert#1\right\Vert}
\newcommand{\maxnorm}[1]{\left\Vert#1\right\Vert_{\max}}
\newcommand{\matp}[1]{\begin{bmatrix} #1 \end{bmatrix}}
\providecommand{\norm}[1]{\left\Vert#1\right\Vert}
\begin{document}

\renewcommand{\thefootnote}{\fnsymbol{footnote}}

   \title[  Random Perturbations of Matrix Polynomials ]
   { Random Perturbations of Matrix Polynomials}
   \author[P.Pagacz, M. Wojtylak]{ Patryk Pagacz \and Micha\l{} Wojtylak$^*$ }
\thanks{MW acknowledges the financial support by the NCN (National Science Center) grant, decision No. DEC-2013/11/B/ST1/03613}
\footnotetext[0]{ Faculty of Mathematics and Computer Science,
   Jagiellonian University,
   \L ojasiewicza 6
   30-348 Krak\'ow
  Poland, \texttt{patryk.pagacz@im.uj.edu.pl, michal.wojtylak@uj.edu.pl$^{*}$} }
\footnotetext[1]{corresponding author
}

\begin{abstract}
A sum of a large-dimensional random matrix polynomial and a fixed low-rank matrix polynomial is considered. The main assumption is that the resolvent of the random polynomial converges to some deterministic limit. A formula for the limit of the resolvent of the sum is derived and the eigenvalues are localised.
Four instances are considered: a low-rank matrix perturbed by the Wigner matrix, a product $HX$ of a fixed diagonal matrix $H$ and the Wigner matrix $X$ and two special matrix polynomials of higher degree. The results are illustrated with various examples and numerical simulations.
\end{abstract}
\subjclass[2000]{Primary 15A18,  Secondary 15B52,  47A56}
\keywords{Matrix polynomial, eigenvalue, random matrix,  limit distribution of eigenvalues.}
\thanks{}

\maketitle

\section*{Introduction}

\subsection*{Motivation}
Since the seminal works of Wigner \cite{wigner} and Marchenko and Pastur \cite{MP69}
spectral theory of random matrices has gathered a huge interest.  In particular, studying  the limit laws of eigenvalues was considered many times in the literature (\cite{Bai,BGnew,BGRnew,BCnew,Byrc,cap1,cap2,FP,Pizzo,Rnew,Tnew}).
 One of the recent techniques in this field is to investigate the limit (in probability) of the resolvent
$
(X_N-zI_N)^{-1}.
$
This was done already for Hermitian matrices, e.g. when $X_N\in\Comp^{N\times N}$ is a generalised Wigner matrix, see \cite{knowles2,Erdos2012,knowles3,knowles,knowles2017}.
In particular, the local isotropic semicircle law, stated in \cite{knowles2}, says that for a suitably chosen family of compact set $\mathbf{S}_N$ in the upper half-plane
 $$
 \sup_{z\in\mathbf S_N}\maxnorm{ (X_N -zI_N)^{-1} - m_{\x}(z) I_N }
 $$
 converges in probability, with a rate $\mathcal{O}(N^{-\frac\omega2})$, to zero, see Example \ref{Wigner} for details.  Here $m_{\W}(z)$ denotes the Stjelties transform of the Wigner semicircle law. As the sets $\mathbf{S}_N$ approach the real line, the local isotropic semicircle law becomes a tool to study the distribution of the eigenvalues.

Our aim is to investigate the limit of the resolvent for  some classes of nonsymmetric matrices and matrix polynomials. Let us recall that already several studies have addressed the canonical forms of nonrandom structured matrices and matrix polynomials  \cite{GLR} and their
change under a low-rank perturbation, see e.g. \cite{Batzkeetal,TeDo07,TeDo09,MoDo02,LM03,MMRR1,MMRR2,MMRR3,MMRR4}.
%
However,  the theory of random matrix polynomials is yet uncharted.

The  additional motivation for the current research  lies in noncommutative probability. Recall that 
 deforming a random matrix $X_N$ one obtains a deformation of the moment expansion of its limiting resolvent. This was already studied  in \cite{MWPP} for
$
X_N=H_NW_N,
$
where $W_N$ is a symmetric Wigner matrix and $H_N=\diag(c_1,1,\dots,1)$, $c_1\in\Real$. See also \cite{bozjwys98,bozjwys01}  for other works on moment deformations.

\subsection*{The results}
Let us recall first the basic notions.   For a matrix polynomial $X(z)=\sum_{j=0}^k X^{(j)} z^j$, with $X^{(j)}\in \Comp^{N\times N}$, $j=1,\dots n$ a point $\lambda\in\Comp$ is called an \emph{eigenvalue} if $X(\lambda)x=0$ for some nonzero $x\in\Comp^N$.
A polynomial is called \emph{regular} if $\det X(z)$ is a nonzero function. In such case, the matrix $X(\lambda)$ ($\lambda\in\Comp$) is invertible if and only if $\lambda$ is not an eigenvalue. This allows us to define the resolvent of a regular matrix polynomial as
$$
X(z)^{-1}:=\left( \sum_{j=0}^k z^j X^{(j)} \right)^{-1},
$$
which is a matrix valued rational function with   poles in the eigenvalues. We will consider eigenvalues and resolvent only for regular polynomials with the leading coefficient being invertible matrix. Hence, we will not  investigate the eigenvalue infinity. Let us turn now to the main results of the paper, a further review of necessary linear algebra and probability notions is contained in {\bf Section \ref{sP}}.

In {\bf Section \ref{sM}} we will consider a general setting of random matrix polynomials
$$
X_N(z)=\sum_{i=0}^k z^iX_N^{(i)}\in\Comp^{N\times N}[z],
$$
where the degree $k$ of the polynomial is fixed and does not depend on $N$ and  the matrices $X_N^{(i)}$ are either deterministic or random. The leading assumption is that the polynomias $X_N(z)$ are regular and $X_N(z)$ is  invertible on a set $S_N\subseteq\Comp$, $S_N\subseteq S_{N+1}$ for  $N=1,2,\dots$ and the  resolvent $X_N(z)^{-1}$ converges, pointwise in $z$, in probability to $M(z)$ on the union of $S_N$ (see {\bf Definition \ref{Defiso}} for details).  In such setting we will investigate
how these objects behave
under a low rank perturbation $X_N(z)+A_N(z)$.
 Our first main result, {\bf Theorem~\ref{thres}}, states precisely how the sets $S_N$, the limit $M_N(z)$ and the convergence rate are deformed in this general situation. Further, in
 {\bf Theorem \ref{spectrum}}, we locate and count the eigenvalues of  $X_N(z)+A_N(z)$, appearing in the union of $S_N$, after such deformation.

Further sections are devoted to the study of concrete ensembles. 
And so, {\bf Section~\ref{s3}} contains a result on low rank non-Hermitian perturbations of Wigner and random sample covariance matrices.  We  obtain the limit of the resolvent $(A_N+X_N-zI_N)^{-1}$ and show the limit points and convergence rate of the non-real points of the spectrum in {\bf Theorem \ref{matrixmain}}. Shortly these  can be formulated as follows.

{\it Let $X_N$ be a generalised Wigner or sample covariance matrix and let $A_N$  be deterministic, fixed low rank perturbation, e.g., $
A_N:= C\oplus 0_{N-n,N-n} \in\Comp^{N \times N}.
$
Then the resolvent  of $A_N+X_N$ converges in the maximum norm in probability to
$$
\widetilde{M}_N(z)=\big [ m_{\x}(z)I_N-m_{\x}^2(z)((C^{-1}+m_{\x}(z)I_n)^{-1}\oplus 0_{N-n,N-n})\big].
$$
Furthermore, if $z_0\in\Comp^+$ 
is such that $\xi=-\frac 1{m(z_0)}$ is an eigenvalue of
 $C$ with the algebraic multiplicity $k_\xi$ and the size of the largest Jordan block equal to $p_\xi$, then the $k_\xi$ eigenvalues $\lambda_{1}^N,\dots,\lambda^N_{k_\xi}$ of $X_N+A_N$ closest to $z_0$ are simple and converge to $z_0$ in probability, with the rate $\mathcal{O}( N^{-\frac1{ 2p_\xi}})$.}


 Matrices of type $HX$ with $H=H^*$ invertible  are well known in linear algebra, see e.g \cite{GLR,MMRR1}.
{\bf Section \ref{HW}} discusses products $H_NX_N$, where $H_N$ is a deterministic diagonal matrix with $(H_N-I_N)$ being of fixed  low rank  and $X_N$ is a Wigner or a random sample covariance matrix. In {\bf Theorem~\ref{MainForX_N2}} we provide the limit of the resolvent and limit and convergence rate of nonreal eigenvalues for $H_MX_N$. It is important to notice that already in this matrix problem it is necessary to apply the main results to nontrivial matrix polynomials of degree one (linear pencils). Namely, we set $X_N(z)=X_N-zI_N$, $A_N(z)=-zH_N^{-1}(I-H_N))$ so that $H_N(X_N(z)+A_N(z))=H_NX_N-zI_N$.

{\bf Section \ref{spoly}} contains a study of matrix polynomials of the form
$$
X_N- p(z)I_N + q(z)u_N u_N^*
$$
 and
 $$
  z^2(C_n\oplus0_{N-n}+I_N)+zX_N+D_n\oplus0_{N-n},
$$
where $p,q$ are polynomials, $X_N$ is either a Wigner or a random sample covariance matrix, $u_N$ is some deterministic vector and $C_n,D_n\in\Comp^{n\times n}$ are diagonal deterministic matrices.
This choice is motivated by the fact that matrix polynomials of this form appear in numerical methods for partial differential equations, see \cite{BHMST}. Again, we localise the spectrum of the given above polynomial by means of Theorems \ref{thres} and \ref{spectrum} and show difficulties appearing in a particular example.


\subsection*{Relation to existing results}

The main novelty of the current paper lies in the formula for the limit of the resolvent and in considering nonlinear eigenvalue problems. So far the limit laws for the resolvent were considered only for matrices and were of isotropic type, i.e., $m(z)I_N$ with a scalar analytic function $m(z)$.  Our construction leads to limit laws of different type and also for polynomials of degree greater than one.
Note that although eigenvalue problems for matrices are special cases of eigenvalue problems for polynomials,  many of the methods suitable for matrices, like, e.g., analysis of $\tr X^n$, cannot be adapted in the polynomial case. Our general theorems form Section \ref{sM}  develop a method which works like a `black box': knowing a limit of the resolvent of a polynomial $X_N(z)$ one is able to compute the limit of the resolvent of $X_N(z)+A_N(z)$.
Furthermore, by detecting the sets on which $X_N(z)+A_N(z)$ is invertible, we localise the eigenvalues.

As it was already said, our technique is applied also to matrices, i.e., to polynomials $X_N-zI_N$.
Although the low-rank perturbations of Wigner matrices were considered in many papers, see e.g.  \cite{BG1, BG2, BG3,BGnew,BGRnew,knowles2,BCnew,cap1,Rnew,Tnew}, the authors usually concentrate  on Hermitian or symmetric perturbations. The exceptions are the papers \cite{Knew,Rnew}, see the former for  the literature on physical motivations. In the latter paper Rochet considered the possibly non Hermitian  finite-rank perturbations $A$ of Wigner matrices $W$, proving   results  on the limit and convergence rate of non-real eigenvalues. More precisely,  the `Furthermore' part of Theorem \ref{matrixmain} (see also the simplified version above) is, generally speaking, a repetition of Theorems 2.3 and 2.10 from \cite{Rnew}.
Note that the paper \cite{Rnew} was a continuation of \cite{BGRnew}, where the authors considered a low-rank perturbation of a random matrix with a distribution  invariant under the left and right actions of the unitary group. Analogous  convergence rates for outliers $\mathcal{O}( N^{-1/({ 2p_\xi})})$  were obtained therein.

In the current paper we show how the resolvent tools can be used to find the convergence
rate of the eigenvalues of matrices converging to the non-real limits,
repeating the aforementioned result on eigenvalues from \cite{Rnew}.
However, in addition to \cite{Rnew}, we provide a formula and convergence rate for the limit of the resolvent after perturbation. We also estimate the rate of the convergence (to zero) of the imaginary part non-real  eigenvalues which are not outliers, see Example \ref{Wignerplusi2}. These two aspects were not studied in \cite{Rnew}.

The results on the products $H_NW_N$ also refine the existing ones  from \cite{MWPP, Wojtylak12b}
by showing the limit of the resolvent and considering a much wider class of $H_N$.

The last section on polynomials contains original, up to our knowledge, results on nonlinear eigenvalue problems with random coefficients.

\subsection*{The outcome}
There are two main outcomes of the present paper:
\begin{itemize}
\item extension of the knowledge of limit laws for the resolvents by providing new limit laws for the resolvents of polynomials of type  $X_N+A_N-zI_N$,  $X_N-zI_N-zA_N$, $p(z)I_N +q(z) A_N + X_N$, $
  z^2(A_N+I_N)+zX_N+B_N,
$where $A_N$ are $B_N$ are  low rank  and non-symmetric matrix and $p(z)$ and $q(z)$ are scalar polynomials, we stress that these limit laws are no longer isotropic;
\item analysis of limits in $N$ of spectra of  polynomials of the above type, with a special emphasis on investigating the convergence rates.
\end{itemize}

In the future, employing results for non-symmetric matrices or structured matrix polynomials would be most desirable, see e.g. \cite{RajanAbbot} for applications in neural networks. However,  the limit laws for the resolvent have not been discovered yet, see \cite{bordenave} for a review. Nonetheless, the general scheme we propose in Section \ref{sM} is perfectly suited for studying those as well.

\color{black}

\section{Preliminaries}\label{sP}
\subsection{Linear algebra}

First let us introduce various norms on spaces of matrices.
If $b$ is a vector then by $\norm{b}_p$ we denote the $\ell^p$-norm of $b$. If $A\in\Comp^{k \times l}$  then
$$
\norm{A}_{p,q}:=\sup_{x\neq 0}\frac{\norm{Ax}_q}{\norm x_p},\quad 1\leq p,q\leq\infty.
$$
We abbreviate $\norm{A}_{p,p}$ to $\norm{A}_p$. Recall that
\begin{equation}\label{norm1infty}
\norm{A}_{1,\infty}\leq \norm A_2
\end{equation}
\begin{equation}\label{norm2infty}
\norm{A}_{2,\infty}\leq \norm A_2,
\end{equation}
\begin{equation}\label{norm12}
\norm{A}_{1,2}\leq \norm A_2.
\end{equation}
Recall also the following formulas, valid  for $A=[a_{ij}]\in\Comp^{k\times l}$,
\begin{equation}\label{H1new}
\norm{A}_{1}=\max_{1\leq j\leq l}\sum_{i=1}^k |a_{ij}|,\quad  \norm{A}_{\infty}= \max  _{1 \leq i \leq k} \sum _{j=1} ^l | a_{ij} |.
\end{equation}
Further, $\norm A_{\max}$ denotes the maximum of the absolute values of all entries of $A$, and clearly
\begin{equation}\label{normmaxpq}
\maxnorm{A}\leq \norm A_{p,q}\leq k\maxnorm{A},\quad 1\leq p,q\leq \infty.
\end{equation}
By $I_N$ we denote the identity matrix of size $N$.
For  matrices $P\in\Comp^{N\times n}$ and $Q\in\Comp^{n\times N}$  we define
$$
\kappa_1(Q):=\sup_{E\in\Comp^{N\times N},\ E\neq 0}\frac{\norm{QE}_{1}}{\norm{E}_{\max}},
$$
$$
\kappa_\infty(P):=\sup_{E\in\Comp^{N\times N},\ E\neq 0}\frac{\norm{EP}_{\infty}}{\norm{E}_{\max}},
$$
\begin{equation}\label{defkappa}
\kappa(P,Q):=\sup_{E\in\Comp^{N\times N},\ E\neq 0}\frac{\norm{QEP}_{2}}{\norm{E}_{\max}}
\end{equation}
Let us denote the maximal number of nonzero entries in each row of $Q$ by $r(Q)$ and  the maximal number of nonzero entries in each column of $P$ by $c(P)$.
\begin{proposition}\label{ex1}
For  $Q\in\Comp^{n\times N}$ and $P\in\Comp^{N\times n}$ the following inequalities hold
$$
\kappa_1(Q)\leq n\cdot r(Q)\norm{Q}_{\max} ,
\quad
\kappa_\infty(P)\leq n\cdot c(P)\norm{P}_{\max} ,
$$ $$
 \kappa(P,Q)\leq n\cdot  r(Q)\  c(P)  \norm{Q}_{\max}\norm{P}_{\max}\leq n\cdot  r(Q)\  c(P)  \norm{Q}_2\norm{P}_2.
$$
\end{proposition}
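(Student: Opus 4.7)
The plan is to estimate each of the three quantities by an entry-wise analysis, since the sparsity parameters $r(Q)$ and $c(P)$ directly bound the number of nonzero terms in the matrix products. The last inequality is then a cosmetic upgrade via \eqref{normmaxpq}. No step looks deep; the only thing to be careful about is using the right formula from \eqref{H1new} for the appropriate side of the product, and invoking the $\maxnorm{\cdot}\to\norm{\cdot}_{p,q}$ conversion \eqref{normmaxpq} on a matrix of the right dimensions.

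For the bound on $\kappa_1(Q)$, I would fix $E\in\Comp^{N\times N}$ nonzero and write $(QE)_{ij}=\sum_k Q_{ik}E_{kj}$. Since row $i$ of $Q$ contains at most $r(Q)$ nonzero entries, the triangle inequality yields $|(QE)_{ij}|\leq r(Q)\norm{Q}_{\max}\norm{E}_{\max}$. Then \eqref{H1new} applied to the $n\times N$ matrix $QE$ gives $\norm{QE}_1=\max_j\sum_{i=1}^n|(QE)_{ij}|\leq n\cdot r(Q)\norm{Q}_{\max}\norm{E}_{\max}$, which divides through to yield the claim.

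The bound on $\kappa_\infty(P)$ is the mirror image: write $(EP)_{ij}=\sum_k E_{ik}P_{kj}$, use that column $j$ of $P$ has at most $c(P)$ nonzero entries to get $|(EP)_{ij}|\leq c(P)\norm{E}_{\max}\norm{P}_{\max}$, and then apply the $\norm{\cdot}_\infty$ formula from \eqref{H1new} to the $N\times n$ matrix $EP$, summing over $j=1,\dots,n$.

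For $\kappa(P,Q)$, I would combine both observations entrywise. Writing $(QEP)_{ij}=\sum_{k,l}Q_{ik}E_{kl}P_{lj}$ and using that for fixed $i$ the $k$-sum has at most $r(Q)$ nonzero contributions and for fixed $j$ the $l$-sum has at most $c(P)$ nonzero contributions, I get
\[
\maxnorm{QEP}\leq r(Q)\,c(P)\,\norm{Q}_{\max}\norm{P}_{\max}\norm{E}_{\max}.
\]
Since $QEP\in\Comp^{n\times n}$, \eqref{normmaxpq} gives $\norm{QEP}_2\leq n\,\maxnorm{QEP}$, which delivers the first half of the third inequality. The final step is the trivial use of \eqref{normmaxpq} in the form $\maxnorm{Q}\leq\norm{Q}_2$ and $\maxnorm{P}\leq\norm{P}_2$ to pass from the $\max$-norms of $P$ and $Q$ to their $2$-norms. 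I expect no serious obstacle; the one place to watch out for is that the dimension factor in \eqref{normmaxpq} is the number of rows, so it must be applied to $QEP$ (size $n\times n$) rather than to $E$ (size $N\times N$), since otherwise one would pick up an unwanted factor of $N$.
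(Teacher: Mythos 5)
Your proof is correct and follows essentially the same route as the paper: entrywise bounds on the products using the sparsity counts $r(Q)$ and $c(P)$, the explicit formulas \eqref{H1new} for $\norm{\cdot}_1$ and $\norm{\cdot}_\infty$, and the conversion \eqref{normmaxpq} applied to the $n\times n$ matrix $QEP$ for the spectral-norm bound. Your closing caution about applying the dimension factor to $QEP$ rather than to $E$ is exactly the right point of care, and the final passage from $\maxnorm{\cdot}$ to $\norm{\cdot}_2$ matches the paper's last step.
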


\begin{proof}
For $Q=[q_{ij}]$, $E=[d_{ij}]$ we obtain,  using formula \eqref{H1new}, the following
\begin{eqnarray*}
\|QE\|_{1}& =  & \norm{  \left[ \sum_{ j=1 }^N q_{ij}d_{jk} \right]_{ik}}_1\\
&= &\max_{1\leq k\leq N} \sum_{i=1}^n \left|  \sum_{ j=1 }^N  q_{ij}d_{jk} \right| \\
& \leq  & \max_{1\leq k\leq N} \sum_{i=1}^{n} \sum_{j=1}^N |q_{ij}||d_{jk}|\\
& \leq &n r(Q) \norm{Q}_{\max} \|E\|_{\max}.
\end{eqnarray*}
Similarly,
\begin{eqnarray*}
\|EP\|_{\infty}& =  & \norm{  \left[ \sum_{ j=1 }^N d_{ij}p_{jk} \right]_{ik}}_\infty\\
&= &\max_{1\leq i\leq N} \sum_{k=1}^n \left|  \sum_{ j=1 }^N  d_{ij}p_{jk} \right| \\
& \leq  & \max_{1\leq k\leq N} \sum_{k=1}^{n} \sum_{j=1}^N |d_{ij}||p_{jk}|\\
& \leq &n c(P) \norm{P}_{\max} \|E\|_{\max}.
\end{eqnarray*}
The last claim results from the inequalities
\begin{eqnarray*}
\norm{QEP}_2&\leq & n\norm{QEP}_{\max}=n \max_{i,j=1\dts n}\left| \sum_{k,l} q_{ik}d_{kl}p_{lj}\right|\\
& \leq& n r(Q)c(P)\norm{E}_{\max} \norm{P}_{\max}\norm{Q}_{\max},
\end{eqnarray*}
and the relation \eqref{normmaxpq}.

\end{proof}

 The following  elementary result on matrices will be of frequent use. Let $A,B\in\Comp^{n\times n}$ and let $A$ be nonsingular.
Then $A+B$ is nonsingular if and only if $I_n+BA^{-1}$ is nonsingular, and in such case
\begin{equation}\label{ABinv}
 (A+B)^{-1}=A^{-1}(I_n+BA^{-1})^{-1}.
\end{equation}
Let $\norm\cdot$ denote any matrix norm. Then
\begin{equation}\label{ABnorm}
 \norm{(A+B)^{-1}-A^{-1}}\leq\norm{A^{-1}}^2\norm{(I_n+BA^{-1})^{-1}}\norm B.
\end{equation}
Furthermore,
\begin{equation}\label{ABtrue}
 \text{if }\norm{BA^{-1}}<1\text{ then }A+B\text{ is invertible }
 \end{equation}
 and
\begin{equation}\label{ABnorm2}
 \norm{(A+B)^{-1}-A^{-1}}\leq\frac{\norm{A^{-1}}^2\norm B}{{1-\norm{BA^{-1}}}}.
\end{equation}
In many places of this article we will use the well known Woodbury matrix identity. Let us recall that for invertible matrices $X\in\Comp^{N\times N}$, $C\in \Comp^{k \times k}$, and matrices $P\in \Comp^{N \times k}$, $Q\in \Comp^{k \times N}$, the matrix $X+PCQ$ is invertible if and only if $L:=C^{-1}+QX^{-1}P$ is invertible. In such case
\begin{equation}\label{Woodb}
(X+PCQ)^{-1}=X^{-1}-X^{-1}PL^{-1}QX^{-1}.
\end{equation}

\subsection{Probability theory}
In the whole paper we will work with one probability space, which is hidden in the background in the usual manner.
By $\Prob$ and $\Ex$ we denote the probability and expectation, respectively.  We will use the symbol `const' to denote a universal constant, independent from $N$.

\begin{definition}\label{stochdom}
Let
$$
\xi=\{\xi^{(N)}(u): N\in\Nat,\ u\in U^{(N)}\}, \quad \zeta=\{\zeta^{(N)}(u): N\in\Nat,\ u\in U^{(N)}\}
$$
be two families of nonnegative random variables, where $U^{(N)}$ is possibly an $N$-dependent parameter set. We say that $\xi$ is \textit{stochastically dominated by $\zeta$ 
simultaneously in $u$}, if for all $\eps>0$ and $\gamma>0$ we have
\begin{equation}\label{q2}
\Prob \left( \bigcap_{u\in U^{(N)}} \set{\xi^{(N)}(u)\leq N^\eps \zeta ^{(N)}(u)}\right)\geq 1- N^{-\gamma}
\end{equation}
for large enough $N\geq N_0(\eps,\gamma)$.
We will denote the above definition in symbols as  $\xi\prec \zeta$, usually remarking that 
the convergence is 
simultaneous and naming the set of parameters.

Furthermore, we say that $N$-dependent event $\Delta=\{\Delta^{(N)}(u)\subset \Omega :N\in\Nat,\ u\in U^{(N)}\}$ holds (\emph{simultaneously in $u$}) \emph{ with high probability} if $1$ is stochastically dominated by $\textbf{1}_\Delta$ simultaneously in $u$, equivalently,
if for all $\gamma>0$ we have
$$
\Prob \left( \bigcap_{u\in U^{(N)}} \Delta^{(N)}(u)\right) > 1-N^{-\gamma}
$$
for large enough $N\geq N_0(\gamma)$.

\end{definition}

\begin{remark}\label{whyN^-1}
In the sequel we will use without saying  the following facts:
\begin{itemize}
\item[] if $\xi^{(N)}(u)\leq\zeta^{(N)}(u)$ for all $u\in U^{(N)}$ and $N$ sufficiently large then $\xi\prec\zeta$,
\item[] if $\xi\prec\eta\prec\zeta$ then $\xi\prec\zeta$,
\item[] if $\xi_1,\dots,\xi_l\prec\zeta$, $\alpha_1,\dots,\alpha_n\geq0$ then $\alpha_1\xi_1+\dots+\alpha_l\xi_l\prec\zeta$,
\item[] if $\alpha>0$ and $\xi\prec N^{-\beta}$ for all $0<\beta<\alpha$ then $\xi\prec N^{-\alpha}$,
\end{itemize}
where $\xi,\eta,\zeta,\xi_1,\dots,\xi_l$ denote families of nonnegative random variables with a parameter set $U^{(N)}$, as in Definition \ref{stochdom}.
\end{remark}

\begin{remark} Recall that in  the literature (cf. e.g. \cite{knowles2} Definition 2.1)  the symbol $\prec$ denotes the {\it uniform} stochastic domination, namely
for all $\eps>0$ and $\gamma>0$ we have
\begin{equation}\label{q1}
\sup_{u\in U^{(N)}} \Prob \set{ \xi^{(N)}(u)> N^\eps \zeta ^{(N)}(u)}\leq N^{-\gamma}
\end{equation}
for large enough $N\geq N_0(\eps,\gamma)$. It is clear that simultaneous stochastic domination implies uniform stochastic domination and if the variables are Lipschitz continuous then the converse implication also holds, see, e.g., Remark 2.6 of \cite{knowles2} or Corollary 3.19 of \cite{erdos13}.  See also Lemma 3.2 of \cite{knowles2} for other properties of stochastic uniform domination.
To avoid assuming Lipschitz continuity, we will speak only about simultaneous stochastic domination.
\end{remark}

Let us now introduce one of the main objects of our study: a  limit law for the resolvent, defined here for random matrix polynomials.

\begin{definition}\label{Defiso}
Let
$$
X_N(z)=\sum_{i=0}^k z^iX_N^{(i)}\in\Comp^{N\times N}[z]
$$
be a random matrix polynomial, i.e.  the matrices $X_N^{(i)}$ are either deterministic or random matrices, and the degree $k$ of the polynomial is fixed and does not depend on $N$. Let $\mathbf{S}_N\sbs\Comp$ be a family of deterministic  open sets with $\mathbf{S}_N\sbs\mathbf{S}_{N+1}$ for all $N$ and let
$$
M_N:\mathbf{S}_N\to\Comp^{N\times N},\quad \Psi_N(z):\mathbf{S}_N\to[0,+\infty]
$$
be sequences of  deterministic functions such  that for any $z\in\bigcup_N\mathbf{S}_{N}$ the sequence $(\Psi_N(z))_{N}$  converges to zero.
We say that the resolvent $X_N(z)^{-1}$ \textit{has the limit  law $M_N(z)$ on sets $\mathbf{S}_N$ with the rate $\Psi_N(z)$ } if the eigenvalues of $X_N(z)$ are with high probability outside the set $\mathbf{S}_N$    and
$$
\norm{X_N(z)^{-1}-M_N(z)}_{\max} \prec \Psi_N(z)
$$
simultaneously in $z\in\mathbf{S}_N$.
\end{definition}

\begin{remark}
Note that the requirement that the eigenvalues are with high probability outside $\mathbf S_N$ should be read formally as:
 the (parameter-free) event $\{$the eigenvalues are outside ${\mathbf S}_N\}$ holds with probability $\geq 1-N^{-\gamma}$ for $N$ large enough and all $\gamma>0$. This is equivalent to saying that  the polynomial $X_N(z)+A_N(z)$ is invertible with high probability simultaneously in $z\in { \mathbf S}_N$.
\end{remark}

\color{black}

We present main examples, which are the motivation for the above definition.

\begin{example}\label{Wigner} Let $W=W_N=W_N^*$ be an $N\times N$ Hermitian matrix whose entries $W_{ij}$ are independent complex-valued random variables for $i\leq j$, such that
\begin{equation}\label{assW1}
\Ex W_{ij}=0,\quad \const\leq N\Ex|W_{ij}|^2,\quad \sum_j \Ex|W_{ij}|^2=1,
\end{equation}
and for any $p\in\Nat$ expectation of $|\sqrt{N}W_{ij}|^p$ is bounded, i.e.
\begin{equation}\label{assW2}
\Ex|\sqrt{N}W_{ij}|^p\leq \const(p),
\end{equation}
where $\const(p)$ denotes a constant depending on $p$ only.

The function $$m_{\W}(z)= \frac{-z+\sqrt{z^2-4}}{2}$$ is the Stieltjes transform of Wigner semicircle distribution.
It was shown in \cite{knowles2} (see also \cite{knowles3}) that for each $\omega\in(0,1)$, the resolvent $(W_N-zI_N)^{-1}$ has a  limit law $M_N(z)=m_{\W}(z)I_N$ on the set
$$
\mathbf{S}^{\W}_{N,\omega}=\set{z=x+\ii y: |x|\leq \omega^{-1},\ N^{-1+\omega}\leq y\leq\omega^{-1}},
$$
with the rate
$$
\Psi^{\W}_N(z)=\sqrt{\frac{\IM m_{\W}(z)}{Ny}}+\frac1{Ny}.
$$
Indeed, this can easily be deduced from Theorem 2.12, remark after Theorem 2.15 (see also Remark 2.6) and Lemma 3.2(i) of \cite{knowles2}.
The authors call this the isotropic local limit law  because of the form $M_N(z)=m_{\W}(z)I_N$. In the next section we will provide polynomials with the resolvent having  limit law
of a different type.
Furthermore, since $|m(z)|\leq \omega^{-1}$ for $z\in \mathbf{S}_N$,
one has
\begin{equation}\label{ratesup1}
\sup_{z\in \mathbf{S}_{N,\omega}^{\W}}|\Psi_N^{\W}(z)| \leq \Big(\sqrt{\frac{\omega^{-1}}{N N^{-1+\omega}}}+\frac{1}{N N^{-1+\omega}}\Big)=\mathcal{O}(  N^{-\frac\omega2}).
\end{equation}

Another example of a resolvent having  a limit law is given by the same polynomial $W_N-zI_N$ but now with
$\mathbf{S}_N=\mathbf{T}$, where $\mathbf{T}$ is some compact set in the upper half-plane. Observe that in this setting we again have  $M_N(z)=m_{\W}(z)I_N$ with the same rate $\Psi_N^W(z)$, but the estimate \eqref{ratesup1} can be improved to
\begin{equation}\label{ratesup2}
\sup_{z\in\mathbf T}  \Psi^{\W}_N(z)=\mathcal{O}(N^{-\frac{1}2}).
\end{equation}
In what follows we will need both constructions presented in this example.

\end{example}

Our second example  is the isotropic local Marchenko-Pastur limit law.

\begin{example}\label{MP}
Let $Y=Y_N$ be an $M\times N$ matrix, with $N,M$ satisfying
\begin{equation}\label{assMP1}
N^{1/\const}\leq M\leq N^{\const}
\end{equation}
whose entries $Y_{ij}$ are independent complex-valued random variables such that
\begin{equation}\label{assMP2}
\Ex Y_{ij}=0,\quad \Ex|Y_{ij}|^2=\frac1{\sqrt{NM}},
\end{equation}
and for all $p\in\Nat$
\begin{equation}\label{assMP3}
 \Ex|(NM)^{1/4}Y_{ij}|^p=\const(p).
\end{equation}
Let also
$$
\phi=M/N,\quad \gamma_\pm=\sqrt\phi+\frac1{\sqrt\phi}\pm 2,
$$ $$
 \kappa(\RE z)=\min(|\gamma_-- \RE z|,|\gamma_+-\RE z|), \quad K=\min(N,M).
$$
Then resolvent of the polynomial $Y_N^*Y_N-zI_N$ has a  limit law $M_N=m_{\MP}(z) I_N$, where
$$
m_{\MP}(z)=m_{\MP}^\phi(z) = \frac{\phi^{1/2}-\phi^{-1/2}-z+\ii\sqrt{(z-\gamma_-)(\gamma_+-z)}}{2\phi^{-1/2}z}
$$
on the set
$$
\mathbf{S}^{\MP}_{N,\omega}=\set{z=x+\ii y\in\Comp: \kappa(x)\leq\omega^{-1},\ K^{-1+\omega}\leq y \leq \omega^{-1}, \ |z|\geq \omega},
$$
with the rate
$$
\Psi^{\MP}_N(z)=\sqrt{\frac{\IM m_\phi(z)}{Ny}}+\frac1{Ny}.
$$
As in the previous example, this is an example of an isotropic limit law and can be deduced from the results in \cite{knowles2}: Theorem 2.4, Remark 2.6 and Lemma 3.2(i). Furthermore, one has that
\begin{equation}\label{ratesup1MP}
\sup_{z\in \mathbf{S}_{N,\omega}^{\MP}}|\Psi_N^{\MP}(z)| \leq \mathcal{O}(  N^{-\frac\omega2}).
\end{equation}
As in Example \ref{Wigner} we  change the setting by putting
$\mathbf{S}_N=\mathbf{T}$, where $\mathbf{T}$ is some compact set in the upper half-plane, which leads to the estimate
\begin{equation}\label{ratesup2MP}
\sup_{z\in\mathbf T}  \Psi^{\MP}_N(z)=\mathcal{O}(N^{-\frac{1}2}).
\end{equation}
\end{example}

Further examples of local limit laws in the literature (which are, in particular, limit laws for the resolvent) concern matrices of type $(Y_N-w I_N)^*(Y_N-w I_N)$, where $w$ is a complex parameter, applied in the Hermitisation technique, see \cite[Theorem 6.1]{bourgade}.

\begin{remark} It is worth mentioning, that having a limit law for the resolvent of the family of polynomials $X_N(z)=\sum_{j=0}^k z^jX_N^{(j)}$ it is relatively easy to derive a limit law for the resolvent of the polynomials $\alpha X_N(z) +\beta I_N$, $\alpha,\beta\in\Comp$, $zX_N(z)$ and $\text{rev}X_N(z):=\sum_{j=0}^k z^{k-j}X_N^{(j)}$. In particular, $X_N(z)=zX_N-I_N$, where $X_N$ is a generalised Wigner matrix, is an example of a first order polynomial having a limit law for the resolvent, with a nontrivial leading coefficient.

\end{remark}

%
%
%
%

We conclude this section with an example of a random matrix without a resolvent limit law, to show the difference between the resolvent limit law and stochastic convergence of  eigenvalues.

\begin{example}\label{Normal} Let $X_N\in\Comp^{N\times N}$ be a diagonal matrix, with elements on the diagonal being i.i.d. standard normal variables.
Although the empirical measures of the eigenvalues of $X_N$ converge weakly in probability to the normal distribution, the resolvent  $(X_N-zI_N)^{-1}$ does not converge in any reasonable sense.
\end{example}

\section{Main results}\label{sM}

\subsection{The resolvent}

In this subsection we will show how a low-dimensional perturbation deforms a resolvent limit law.
Recall that $r(B),c(B)$ denote, respectively, the maximal number of nonzero entries in each row and column of a matrix $B$.
\begin{theorem}\label{thres}
Let $(n_N)_N$ be a nondecreasing sequence  and let
$$
C_N(z)\in\Comp^{n_N\times n_N}[z],\quad A_N(z):= P_NC_N(z  )Q_N\in\Comp^{N\times N}[z],
$$
be deterministic  matrix polynomials, where $P_N\in\Comp^{N\times n_N}$, $Q_N\in\Comp^{n_N\times N}$.
Let  $X_N(z)\in\Comp^{N\times N}[z]$ be a random matrix  polynomial. We assume that
\begin{enumerate}
\item[(a1)] $X_N(z)^{-1}$ has a  limit law $M_N(z)$ on a family of sets $\mathbf{S}_N$ with rate $\Psi_N(z)$, see Definition \ref{Defiso},
\item[(a2)] $C_N(z)$ is invertible for $z\in\mathbf S_N$,
\item[(a3)] we have that
\begin{equation}\label{add0}
n_N \sup_{z\in\mathbf{S}_N}\Psi_N(z) \leq\mathcal{O}(N^{-\alpha}),
\end{equation}
for some $\alpha>0$,
\item[(a4)]  $\norm{P_N}_2,\norm{Q_N}_2,c(P_N),r(Q_N)\leq \const$.

\end{enumerate}

 Then, for any $\beta\in(0,\alpha)$,
 the eigenvalues of the random polynomial
 $X_N(z)+A_N(z)$ are with high probability outside the set
 \begin{equation}
\widetilde{\mathbf{S}}_N:=\bigg\{z\in\mathbf{S}_N:K_N(z)\textrm{ \rm is invertible, } \label{ZZZ}\\
 {\norm{K_N(z)^{-1}}_2}<N^{\beta}\bigg\},
\end{equation}
where
\begin{equation}\label{KNdef}
K_N(z)=C_N(z)^{-1}+Q_NM_N(z)P_N.
\end{equation}
Furthermore, the resolvent of
$X_N(z)+A_N(z)$ has a  limit law on $\widetilde{\mathbf{S}}_N$
\begin{equation}\label{resX}
\widetilde{M}_N(z)=M_N(z)-M_N(z)P_NK_N(z)^{-1}Q_NM_N(z),
\end{equation}
with the rate
$$
\widetilde{\Psi}_N(z):= N^\alpha n_N \Psi_N(z)\norm{M_N(z)}_2^2
$$
under the additional assumption that $\widetilde{\Psi}_N(z)$ converges to zero for $z\in\bigcup_N\mathbf{S}_N$.
\end{theorem}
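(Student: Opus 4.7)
The plan is to apply the Woodbury identity \eqref{Woodb} with $P=P_N$, $C=C_N(z)$ and $Q=Q_N$, which, whenever all relevant inverses exist, gives
\[
(X_N(z)+A_N(z))^{-1} = X_N(z)^{-1} - X_N(z)^{-1} P_N L_N(z)^{-1} Q_N X_N(z)^{-1},
\]
where $L_N(z)=C_N(z)^{-1}+Q_N X_N(z)^{-1} P_N$. The candidate limit $\widetilde{M}_N(z)$ is precisely what one obtains from this formula by replacing $X_N^{-1}$ with $M_N$ and $L_N^{-1}$ with $K_N^{-1}$. The whole theorem then splits into two tasks: (i)~securing invertibility of $L_N(z)$, and hence of $X_N(z)+A_N(z)$, on $\widetilde{\mathbf{S}}_N$ with high probability; and (ii)~estimating in $\norm{\cdot}_{\max}$ the difference of the two Woodbury expressions.

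For (i), subtracting definitions gives $L_N(z)-K_N(z)=Q_N(X_N(z)^{-1}-M_N(z))P_N$, so Proposition~\ref{ex1} combined with assumption (a4) bounds its $2$-norm by $\kappa(P_N,Q_N)\norm{X_N^{-1}-M_N}_{\max}\prec n_N\Psi_N(z)$, simultaneously on $\mathbf{S}_N$. By (a3) this is $\prec N^{-\alpha}$, and since $\norm{K_N^{-1}}_2<N^\beta$ on $\widetilde{\mathbf{S}}_N$ we obtain $\norm{(L_N-K_N)K_N^{-1}}_2\prec N^{\beta-\alpha}\to 0$. Thus \eqref{ABtrue} produces invertibility of $L_N$ (and via Woodbury of $X_N+A_N$) on $\widetilde{\mathbf S}_N$ with high probability, while \eqref{ABnorm2} yields $\norm{L_N^{-1}-K_N^{-1}}_2\prec N^{2\beta}n_N\Psi_N(z)$ and $\norm{L_N^{-1}}_2\leq 2N^\beta$ with high probability.

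For (ii) I would use the algebraic decomposition
\begin{align*}
(X_N+A_N)^{-1}-\widetilde{M}_N
&= (X_N^{-1}-M_N) - (X_N^{-1}-M_N)P_N L_N^{-1} Q_N X_N^{-1}\\
&\quad - M_N P_N L_N^{-1} Q_N (X_N^{-1}-M_N) - M_N P_N (L_N^{-1}-K_N^{-1}) Q_N M_N,
\end{align*}
and bound each summand in $\norm{\cdot}_{\max}$. The first is $\prec\Psi_N$ by (a1). The last, via $\norm{\cdot}_{\max}\leq\norm{\cdot}_2$ combined with submultiplicativity and (i), contributes at most of order $\norm{M_N}_2^2\cdot N^{2\beta}n_N\Psi_N(z)$. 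For the two mixed terms I would split $X_N^{-1}=M_N+(X_N^{-1}-M_N)$ and estimate each resulting piece entrywise as a bilinear form $u^T L_N^{-1} v$ with $u,v\in\Comp^{n_N}$: vectors built from $D:=X_N^{-1}-M_N$ (such as the $i$-th row of $DP_N$) have $\ell^2$ norm $\prec\sqrt{n_N}\Psi_N$ by the sparsity bounds of Proposition~\ref{ex1}, while vectors built from $M_N$ (such as $P_N^T M_N^T e_i$) have $\ell^2$ norm $\leq\mathrm{const}\cdot\norm{M_N}_2$; this yields contributions of order $\sqrt{n_N}\,N^\beta\Psi_N(z)\norm{M_N}_2$ for the ``$M_N$-with-$D$'' cross pieces, and of order $n_N N^\beta\Psi_N(z)^2$ for the ``$D$-with-$D$'' piece.

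The main technical obstacle is the final bookkeeping: combining the four contributions and invoking (a3), i.e.\ $n_N\Psi_N(z)\leq\mathcal{O}(N^{-\alpha})$, to trade factors of $N^\beta$, $n_N$ and $\Psi_N$ against each other, together with the elementary inequality $\norm{M_N}_2\leq\norm{M_N}_2^2+1$ to unify the cross contributions with the pure one, in order to collapse everything into the single rate $\widetilde\Psi_N(z)=N^\alpha n_N\Psi_N(z)(\norm{M_N(z)}_2^2+1)$. Throughout one has to separate carefully ``with high probability'' events (invertibilities of $X_N$, $L_N$, and $X_N+A_N$) from ``simultaneous'' max-norm bounds on $z\in\mathbf{S}_N$, combining them via the closure properties of $\prec$ under addition and multiplication recalled after Definition~\ref{stochdom}.
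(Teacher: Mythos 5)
Your proposal is correct and follows the same overall strategy as the paper: apply the Woodbury identity, view $L_N(z)=C_N(z)^{-1}+Q_NX_N(z)^{-1}P_N$ as a perturbation of $K_N(z)$ by $Q_N(X_N^{-1}-M_N)P_N$, control that perturbation via Proposition~\ref{ex1} and (a3)--(a4), invoke \eqref{ABtrue}--\eqref{ABnorm2} for invertibility and for $\norm{L_N^{-1}-K_N^{-1}}_2$, and then estimate the difference of the two Woodbury expressions term by term in $\norm{\cdot}_{\max}$. Where you genuinely diverge is in the bookkeeping of part (ii): the paper writes $X_N^{-1}=M+E$ and $L^{-1}=K^{-1}+E_1$ and expands fully into eight summands, each bounded through the mixed operator norms $\norm{\cdot}_{1,\infty}$, $\norm{\cdot}_{2,\infty}$, $\norm{\cdot}_{1,2}$ and the quantities $\kappa_1(Q_N)$, $\kappa_\infty(P_N)$, $\kappa(P_N,Q_N)$; you instead keep $L^{-1}$ intact in a four-term telescoping decomposition (which is algebraically correct) and bound the cross terms entrywise as bilinear forms $u^TL^{-1}v$. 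Your route is shorter and even yields a marginally better factor ($\sqrt{n_N}$ in place of $n_N$ in the cross terms), at the price of having to establish separately that $\norm{L_N^{-1}}_2\leq 2N^\beta$ with high probability; the paper's route avoids that by always routing through $K^{-1}$ and $E_1$. One caveat you share with the paper: the worst term contributes $N^{2\beta}n_N\Psi_N\norm{M_N}_2^2$, which matches the announced rate $N^\alpha n_N\Psi_N(\norm{M_N}_2^2+1)$ only for $\beta\leq\alpha/2$ (the case the paper actually treats, deferring general $\beta<\alpha$ to unspecified ``technical adjustments''), so your final collapsing step should be stated for $\beta=\alpha/2$ or the rate adjusted accordingly.
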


\begin{proof} 
We  set $\beta=\alpha/2$, the proof for arbitrary $\beta<\alpha$ requires only few technical adjustments.
Fix arbitrary  $\gamma>0$.  Due to (a1) and  the definition of stochastic simultaneous domination  (Definition \ref{stochdom},  $\eps=\alpha/4$) we have that with $E_N(z)=X_N(z)^{-1}-M_N(z)$ the following event
\begin{equation}\label{Emax}
\Theta:=\set{\forall z\in\mathbf S_N \  \norm{E_N(z)}_{\max}\leq N^{\alpha/4}\Psi_N(z)},
\end{equation}
holds with probability $\geq 1-N^{-\gamma}$, for $N\geq N_0(\alpha,\gamma)$ sufficiently large. Note that, if $\Theta$ occurs, one has that, for all $z\in \widetilde{\mathbf{S}}_N$,
\begin{eqnarray*}
\norm{Q_NE_N(z)P_NK_N(z)^{-1}}_2&\leq& \norm{Q_NE_N(z)P_N}_2 N^{\frac\alpha2},\text{ by }\eqref{ZZZ},\\
&\leq &  \kappa(P_N,Q_N)\norm{E_N(z)}_{\max} N^{\frac\alpha2}, \text{ by \eqref{defkappa} }, \\
&\leq&  \kappa(P_N,Q_N)  N^{\frac\alpha2 +\frac\alpha4}\Psi_N(z), \text{ by \eqref{Emax}},\\
&\leq&  n_N c(P_N) r(Q_N) \norm{P_N}_2 \norm{Q_N}_2 N^{\frac{3\alpha}4} \Psi_N(z) , \text{ by Prop. \ref{ex1}}, \\
&\leq&  \const  \cdot n_N  N^{\frac{3\alpha}4}\sup_{z\in\mathbf{S}_N }\Psi_N(z),  \text{ by (a4)}.
\\
\end{eqnarray*}

Note that by the assumption (a3),  if $\Theta$ occurs then
$$
\forall z\in\widetilde{\mathbf S}_N \  \norm{Q_NE_N(z)P_NK_N(z)^{-1}}_2<1.
$$
Consequently, the above inequality holds with probability $\geq 1-N^{-\gamma}$, for sufficiently large $N\geq N_0(\alpha,\gamma)$. By the Woodbury matrix equality,  the matrix
$X_N(z)+P_NC_N(z)Q_N$ is invertible if and only if $C_N(z)^{-1}+Q_NX_N(z)^{-1}P_N$ is invertible.
Note that
$$
C_N(z)^{-1}+Q_NX_N(z)^{-1}P_N=Q_NE_N(z)P_N+K_N(z).
$$
This, together with \eqref{ABinv} implies that on the event $\Theta$ the matrix
$X_N(z)+P_NC_N(z)Q_N$ is invertible for sufficiently large $N\geq N_0(\alpha,\gamma)$. As $\gamma$ was arbitrary we see that  the eigenvalues of $X_N(z)+A_N(z)$ are outside $\widetilde{\mathbf{S}}_N$  with high probability.

Now we prove the convergence of $(X_N(z)+A_N(z))^{-1}$. Let
$$
E^{(1)}_N(z):=(C_N(z)^{-1}+Q_N(M_N(z)+E_N(z))P_N)^{-1} - K_N(z)^{-1}.
$$
Consider
$$
(X_N(z)+ A_N(z))^{-1}=(X_N(z)+P_NC_N(z)Q_N)^{-1}=
$$
$$
=X_N(z)^{-1}-X_N(z)^{-1}P_N(C_N(z)^{-1}+Q_NX_N(z)^{-1}P_N)^{-1}Q_NX_N(z)^{-1}
$$
$$
=(M_N(z)+E_N(z))-(M_N(z)+E_N(z))P_N(K_N(z)^{-1}+(E^{(1)})_N(z))Q_N(M_N(z)+E_N(z)).
$$
Confronting with \eqref{resX} and dropping the $z$-dependence  ($z\in\widetilde{\mathbf S}_N$) and the $N$-dependence we obtain the difference to estimate
\begin{eqnarray*}
E^{(2)}:&=&(X+A)^{-1} - \widetilde{M} \\
& = & E+MP E^{(1)}QM+EPK^{-1}QM+EPE^{(1)}QM\\
&+& MPK^{-1}QE+MP E^{(1)}QE+EPK^{-1}QE+EPE^{(1)}QE.
\end{eqnarray*}
We will estimate the maximum norm of each summand in the right hand side of the above equation. For this aim we state some preliminary inequalities. Recall that by Proposition \ref{ex1}, assumptions on $P_N$ and $Q_N$ and \eqref{normmaxpq} one has
 $$
 \kappa_1(Q_N),\kappa_\infty(P_N),\kappa(P_N,Q_N)\leq \const n_N.
 $$
The stochastic domination  below in this proof is simultaneous in  $z\in\widetilde{\mathbf{S}}_N$. One has
\begin{eqnarray}
	\norm{E_N(z)P_N}_\infty &\leq&\kappa_\infty(P_N)\norm{E_N(z)}_{\max}\prec n\Psi_N(z),\label{ep}
	\end{eqnarray}
	\begin{eqnarray}
	\norm{Q_NE_N(z)}_1 &\leq&\kappa_1(Q_N)\norm{E_N(z)}_{\max}\prec n_N\Psi_N(z),\label{qe}
	\end{eqnarray}
    \begin{eqnarray}
    \norm{E^{(1)}_N(z)}_2&=&\|(C_N(z)^{-1}+Q_N(M_N(z)+E_N(z))P_N)^{-1} \nonumber\\
    &\ & -  (C_N^{-1}(z)+Q_NM_N(z)P_N)^{-1} \|_2 \nonumber\\
&\leq &\frac{\norm{(C_N(z)^{-1}+Q_NM_N(z)P_N)^{-1}}_2^2 \norm{Q_NE_N(z)P_N}_2}{1-\norm{Q_NE_N(z)P_N}_2\norm{(C_N(z)^{-1}+Q_NM_N(z)P_N)^{-1}}_2}\nonumber, \quad \text{by \eqref{ABnorm2}}\\
&\leq& \frac{N^{\alpha} \norm{ E_N(z)}_{\max}\kappa(P_N,Q_N)}{1- \norm{ E_N(z)}_{\max}\kappa(P_N,Q_N)N^{\frac{\alpha}{2}}}\nonumber\\
&\prec&\frac{n_N\Psi_N(z)N^{\alpha}}{1-n_N \Psi_N(z) N^{\frac{\alpha}{2}}}\nonumber\\
&\prec& n_N\Psi_N(z)N^{\alpha}.
\label{e1}
\end{eqnarray}

We can  now derive the announced estimation of summands of $E^{(2)}(z)$. In the following estimations we again drop the $z$-dependence and the $N$-dependence, the  stochastic domination  below in this proof is simultaneous in  $z\in\widetilde{\mathbf{S}}_N$. And so we have
\begin{eqnarray*}
\norm{E}_{\max}&\prec &\Psi(z),
\end{eqnarray*}
	\begin{eqnarray*}
\maxnorm{MP E^{(1)}QM}&\leq &\norm{MP E^{(1)}QM}_{2}\\
& \leq & \norm{M}_2^2\norm P_2\norm Q_2 \norm{E^{(1)}}_2\\
&  \prec& n\Psi(z)N^{\alpha}\norm{M}_2^2, \quad \text{by \eqref{e1}}, \\
\end{eqnarray*}
\begin{eqnarray*}
\maxnorm{EPK^{-1}QM} &\leq& \norm{EPK^{-1}QM}_{2,\infty}\\
&\leq& \norm{EP}_\infty  \norm{ K^{-1}}_{2,\infty} \norm{QM}_2 \quad \text{by \eqref{ep} }, \\
&\leq& \kappa_\infty(P)\norm{E}_{\max}\norm{K^{-1}}_2\norm{Q}_2\norm{M}_2 \quad \text{by \eqref{norm2infty} }, \\
&\prec & n \Psi (z)  N^{\frac{\alpha}{2}} \norm{M}_{2}, \quad \text{by \eqref{ep}}, \\
\end{eqnarray*}
\begin{eqnarray*}
\maxnorm{EPE^{(1)}QM}
 &\leq& \norm{EPE^{(1)}QM}_{2,\infty}\\
&\leq& \norm{EP}_\infty  \norm {E^{(1)}}_{2,\infty} \norm{QM}_2\\
&\leq&\kappa_\infty(P)\norm{E}_\infty \norm{E^{(1)}}_{2}\norm{Q}_2\norm{M}_2 ,\quad\text{by \eqref{norm2infty},\eqref{e1}},\\
&\prec& n^2\Psi^2(z)N^{\alpha}\norm{M}_2\\
&\prec & N^{-\alpha}\norm{M}_2, \quad\text{ by \eqref{add0}}, \\
\end{eqnarray*}
	\begin{eqnarray*}
	\maxnorm{MPK^{-1}QE} &\leq& \norm{MPK^{-1}QE}_{1,2}\\
&\leq& \norm{MP}_2  \norm {K^{-1}}_{1,2} \norm{QE}_1,\quad \text{by \eqref{qe} }, \\
&\prec& \norm{M}_2\norm{P}_2   N^{\frac{\alpha}{2}} \kappa_1(Q)\Psi(z),\quad\text{by \eqref{norm12}},\\
  &\prec& n\Psi(z)N^{\frac{\alpha}{2}} \norm{M}_2,
\end{eqnarray*}
\begin{eqnarray*}
\maxnorm{MPE^{(1)}QE} & \leq & \norm{MPE^{(1)}QE}_{1,2}\\
&\leq& \norm{MP}_2\norm{E^{(1)}}_{1,2}\norm{QE}_1,\quad \text{by \eqref{qe}},\\
&\prec& \norm{M}_2  n\Psi(z)N^\alpha \kappa_1(Q)\Psi(z),\quad\text{by \eqref{norm12},\eqref{e1}},\\
&\prec&  n^2\Psi^2(z)N^{\alpha}\norm{M}_2 \\
&\prec & N^{-\alpha}\norm{M}_2,\quad\text{ by \eqref{add0}}, \\
\end{eqnarray*}
\begin{eqnarray*}
\maxnorm{EPK^{-1}QE} &\leq&  \norm{EPK^{-1}QE}_{1,\infty}\\
&\leq &\norm{EP}_\infty\norm{K^{-1}}_{1,\infty}\norm{QE}_1,\quad \text{by \eqref{qe} },\\
&\prec&\Psi(z) \kappa_\infty(P) N^{\frac{\alpha}{2}} \kappa_1(Q)\Psi(z),\quad \text{by \eqref{norm1infty}},\\
&\prec& n^2\Psi^2(z)N^{\frac{\alpha}{2}} \\
&\prec & N^{-3\alpha/2},\quad\text{ by \eqref{add0}}, \\
\end{eqnarray*}
\begin{eqnarray*}
\maxnorm{EPE^{(1)}QE } & \leq &   \norm{EPE^{(1)}QE}_{1,\infty}\\
&\leq& \norm{EP}_\infty\norm{E^{(1)}}_{1,\infty}\norm{QE}_1,\quad \text{by \eqref{qe} },\\
&\prec & \Psi(z)\kappa_\infty(P)n\Psi(z)N^{\alpha}\kappa_1(Q)\Psi(z),\quad \text{by \eqref{norm1infty},\eqref{e1}},\\\
&\prec & n^3\Psi^3(z)N^{\alpha}\\
&\prec & N^{-2\alpha},\quad\text{ by \eqref{add0}}.\\
\end{eqnarray*}

Due to the fact that
$$
N^{\frac{\alpha}{2}} \leq \const N^\alpha,\quad \norm{M_N}_2\leq \const \norm{M_N}_2^2, \quad n_N\Psi_N(z)N^{\frac{\alpha}{2}}\to 0
$$
the proof is finished.
\end{proof}

\subsection{The spectrum}\label{spectrumSection}

In the current subsection the dimension $n_N$ will be constant and denoted by $n$.
First let us prove a technical lemma.

\begin{lemma}\label{ab}
Let the matrices $A,B\in \Comp^{k\times k}$.
Then
 $$
 \big| \det A - \det B \big|\leq k!\cdot k \maxnorm{A-B}\left(\maxnorm{A-B}+\maxnorm{A}\right)^{k-1}.
 $$
\end{lemma}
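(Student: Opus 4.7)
My plan is to reduce the bound on $\det A - \det B$ to a bound on the difference of products of entries, by expanding both determinants via the Leibniz formula
\[
\det M = \sum_{\sigma \in S_k} \sgn(\sigma) \prod_{i=1}^{k} M_{i,\sigma(i)}.
\]
Subtracting termwise gives
\[
\det A - \det B = \sum_{\sigma \in S_k} \sgn(\sigma)\Bigl(\prod_{i=1}^k A_{i,\sigma(i)} - \prod_{i=1}^k B_{i,\sigma(i)}\Bigr),
\]
so the task reduces to controlling each of the $k!$ summands.

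For each fixed permutation, I would apply the standard telescoping identity
\[
a_1\cdots a_k - b_1\cdots b_k \;=\; \sum_{j=1}^{k} a_1\cdots a_{j-1}(a_j-b_j)b_{j+1}\cdots b_k,
\]
with $a_i = A_{i,\sigma(i)}$ and $b_i = B_{i,\sigma(i)}$. Bounding $|a_i|$ and $|b_i|$ by $\maxnorm{A}$ and $\maxnorm{B}$, and each difference $|a_j-b_j|$ by $\maxnorm{A-B}$, yields
\[
\Bigl|\prod_i A_{i,\sigma(i)} - \prod_i B_{i,\sigma(i)}\Bigr| \leq k\,\maxnorm{A-B}\,\max(\maxnorm{A},\maxnorm{B})^{k-1}.
\]
Since $\maxnorm{B} \leq \maxnorm{A}+\maxnorm{A-B}$, the right-hand side is dominated by $k\,\maxnorm{A-B}\bigl(\maxnorm{A}+\maxnorm{A-B}\bigr)^{k-1}$.

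Finally, summing over the $k!$ permutations in $S_k$ and using the triangle inequality gives the desired estimate. The argument is entirely elementary; there is no genuine obstacle, only the bookkeeping of the telescoping expansion and the substitution $\maxnorm{B}\leq\maxnorm{A}+\maxnorm{A-B}$ to eliminate $\maxnorm{B}$ from the final bound.
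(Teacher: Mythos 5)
Your proof is correct and follows essentially the same route as the paper: Leibniz expansion of both determinants, a telescoping decomposition of each difference of products (the paper writes the telescoping with the roles of the $a$'s and $b$'s mirrored, which is immaterial), the bound $\max(\maxnorm{A},\maxnorm{B})\leq\maxnorm{A}+\maxnorm{A-B}$, and summation over the $k!$ permutations. No issues.
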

\begin{proof}
Observe that  with
$$
M=\max(\maxnorm A,\maxnorm B) \leq \maxnorm{A}+\maxnorm{A-B}
$$
 one has
\begin{eqnarray*}
\big| \det A - \det B \big| &\leq& \sum_{\sigma\in S_k} \big|a_{1\sigma(1)}a_{2\sigma(2)}\dots a_{k\sigma(k)}-b_{1\sigma(1)}b_{2\sigma(2)}\dots b_{k\sigma(k)}\big|
\\
&\leq& \sum_{\sigma\in S_k} \Big( |a_{1\sigma(1)}-b_{1\sigma(1)}||a_{2\sigma(2)}\dots a_{k\sigma(k)}|\\
&+&|b_{1\sigma(1)}||a_{2\sigma(2)}-b_{2\sigma(2)}||a_{3\sigma(3)}\dots a_{k\sigma(k)}|+\dots
\\
&+&|b_{1\sigma(1)}b_{2\sigma(2)}\dots b_{(k-1)\sigma(k-1)}||a_{k\sigma(k)}-b_{k\sigma(k)}| \Big)
\\
&\leq& k!\cdot k\maxnorm{A-B}M^{k-1}\\
&\leq & k!\cdot k\maxnorm{A-B} \left(\maxnorm{A-B}+\maxnorm{A}\right)^{k-1}.
\end{eqnarray*}
\end{proof}

The next step in the analysis of the spectra of matrices $X_N+A_N$ is the following theorem, for its formulation let us introduce a usual technical definition.
\begin{definition}
 Suppose a point $z_0\in\Comp$ is given. We order the complex plane with respect to the lexicographic order on $[0,+\infty)\times[0,2\pi)$ identifying a point $ \lambda$ with the pair $(|\lambda-z_0|, \arg(\lambda-z_0))$.
\end{definition}

\begin{theorem}\label{spectrum}
Let $n$ be fixed and let
$$
C(z)\in\Comp^{n\times n}[z],\quad A_N(z):= P_NC(z  )Q_N\in\Comp^{N\times N}[z],
$$
be deterministic  matrix polynomials, where  $P_N\in\Comp^{N\times n}$, $Q_N\in\Comp^{n\times N}$, $N=1,2,\dots$
Let $X_N(z)\in\Comp^{N\times N}[z]$ be a random matrix  polynomial. We assume that
\begin{enumerate}
\item[(a1)] $X_N(z)^{-1}$ has a  limit  law $M_N(z)$ on a family of  sets $\mathbf{S}_N$ with the rate $\Psi_N(z)$, see Definition \ref{Defiso},
\item[(a2.1)] $C(z)$ is invertible for $z\in\bigcup_N\mathbf{S}_N$,
\item[(a3.1)] the following estimate holds
$$
\sup_{z\in \mathbf{S}_N}|\Psi_N(z)|\leq \mathcal{O}(N^{-\alpha})$$
 with some $\alpha>0$,
 \item[(a4)] $\norm{P_N}_2,\norm{Q_N}_2,c(P_N),r(Q_N)\leq\const$,
\item[(a5.1)] the matrix-valued function $z\mapsto Q_NM_N(z)P_N$ is analytic on the interior of $\bigcup_N\mathbf{S}_N$ and does not depend on $N$.
\end{enumerate}
Let also
$$
K(z):=C(z)^{-1}+Q_NM_N(z)P_N,\quad {L_N}(z):=C(z)^{-1}+Q_NX_N(z)^{-1}P_N.
$$
Assume that the function $\det K(z)$ has a zero of order  $k>0$ at a point $z_0$ lying in the interior of $\bigcup_N\mathbf{S}_N$ and let  $\lambda_1^N,\dots,\lambda_k^N,\dots$ be the zeros of $\det {L_N}(z)$ written down with multiplicities in the order given by their distance to $z_0$. Then the first $k$ of them  converge to $z_0$ in the following sense
\begin{equation}\label{c1}
|\lambda_j^N-z_0|\prec N^{-\frac\alpha k},\quad j=1,2,\dots,k,
\end{equation}
 while the $k+1$-st, and in consequence all following ones, do not converge to $z_0$, more precisely, for any  $\beta>0$ the set of random variables $|\lambda_{k+1}^N-z_0|$ is not simultaneously stochastically dominated by $N^{-\beta}$.
\end{theorem}

\begin{proof}
Fix $\eps,\gamma>0$, with $\eps<\alpha$. We show that there are exactly $k$ zeros $\lambda_1^N,\dots,\lambda_k ^N$   of $\det {L_N}(z)$ in $B(z_0,N^{-\beta})$
with probability $\geq 1-N^{-\gamma}$ for $N\geq N_0(\eps,\gamma)$ large enough  and any $\beta\leq\frac{-\eps+\alpha}k$.
This will prove both statements. Indeed,  setting $\beta=\frac{-\eps+\alpha}k$ shows that condition \eqref{q2} in the definition of stochastic simultaneous domination
is satisfied for any $\eps<\alpha$, and hence in an obvious way for any $\eps>0$.
Setting $\beta$ to be arbitrary small shows that $|\lambda_{k+1}^N-z_0|$ is not simultaneously stochastically dominated by $N^{-\beta}$.

Let us fix an open bounded set $\mathbf T$ such that $z_0\in\mathbf T$ and the closure of $\mathbf T$ is contained in the interior of some $\mathbf S_{N}$ ($N\geq 1$).
We may assume without loss of generality that $X_N(z)$ is invertible on $\mathbf{T}$.
Note that due to (a2') and (a5) the function $K(z)$ is continuous on the closure of  $\mathbf{T}$, hence,
 $$
\sup_{z\in\mathbf{T}}\maxnorm{K(z)}\leq \const.
$$
Moreover, due to Proposition \ref{ex1} one has
\begin{eqnarray*}
\maxnorm{{L_N}(z)-K(z)} & \leq & \norm{ Q_N (X_N(z)^{-1}-M_N(z)) P_N}_2 \\
&\leq & n \norm{P_N}_2\norm{Q_N}_2 c(P_N) r(Q_N) \|X_N(z)^{-1}-M_N(z)\|_{\max}.
\end{eqnarray*}
Hence,   by (a1) and (a4) the probability of  the event
\begin{equation}\label{bbb2}
\set{ \forall_{z\in\mathbf{S}_N}\ \maxnorm{{L_N}(z)-K(z)}\leq N^{\frac \eps5}\Psi_N(z)}
\end{equation}
is higher than $1-N^{-\gamma},$ for $N\geq N_1(\eps,\gamma)$ large enough.
Note that $  N^{\frac \eps5}\Psi_N(z)\leq N^{\frac \eps4-\alpha} $ by (a3).
By Lemma \ref{ab} one has
$$
|\det {L_N}(z)-\det K(z)| \leq n\cdot n! \maxnorm{{L_N}(z)-K(z)} \left(\maxnorm{K(z)}+ \maxnorm{{L_N}(z)-K(z)}         \right)^{n-1}.
$$
Hence, the probability of the  event
\begin{equation}\label{bbb25}
\set{\forall_{z\in\mathbf T} \ |\det {L_N}(z)-\det K(z)| \leq n!n   N^{\frac\eps4-\alpha} \left(  N^{\frac\eps4-\alpha} +\maxnorm{K(z)}  \right)^{n-1}}
\end{equation}
is higher than $1-N^{-\gamma},$ for $N\geq N_2(\eps,\gamma)$ large enough.
As $\frac\eps3-\alpha<0$ and $\maxnorm{K(z)}$ is bounded on $\mathbf{T}$  the probability of the  event
\begin{equation}\label{bbb3}
\set{\forall_{z\in\mathbf T}\ |\det {L_N}(z)-\det K(z)| \leq  N^{\frac\eps3-\alpha}}
\end{equation}
is higher than $1-N^{-\gamma},$ for $N\geq N_3(\eps,\gamma)$ large enough.

Observe that
$$\det K(z) = (\det K)^{(k)}(z_{0})(z_{0}-z)^{k}+ o(|z_{0}-z|^{k}).$$
Consequently,
\begin{equation} \label{lhs}
|\det K(z)| \geq |(\det K)^{(k)}(z_{0})| N^{\frac\eps2-\alpha}>N^{\frac\eps3-\alpha},\quad z\in \partial B(z_{0},N^{-\beta}),
\end{equation}
for  sufficiently large $N\geq N_4(\eps,\gamma)$ and any $\beta\leq\frac{-\eps+\alpha}k$.

Combining \eqref{bbb3} and \eqref{lhs} we get
\begin{equation}\label{rouche1}
|\det {L_N}(z)-\det K(z)|<|\det K(z)|,\quad z\in \partial B(z_{0},N^{-\beta})
\end{equation}
 with probability higher than $1-N^{-\gamma}$ for $N\geq N_5(\eps,\gamma)$ large enough.

However,  \eqref{rouche1} implies, via the Rouch\'e theorem, that $\det K(z)$ and $\det {L_N}(z)$ have the same number of zeros in $B(z_0,N^{-\beta})$. Hence, there are exactly $k$ zeros $\lambda_1,\dots,\lambda_k $   of $\det {L_N}(z)$ in $B(z_0,N^{-\beta})$ with probability higher than $1-N^{-\gamma}$ for $N\geq N_5(\eps,\gamma)$ large enough.

\end{proof}

\begin{remark}
 Let us compare Theorems \ref{thres} and \ref{spectrum}. First note that the latter one has slightly stronger assumptions, which are however necessary for defining the limit point $z_0$, also the sequence $n_N$ is required there to be constant. Comparing the claims let us note that both theorems state more or less the same fact: the eigenvalues converge to some limit points with a certain convergence rate. If $n=1$ the claim of Theorem \ref{spectrum} is slightly than the one of Theorem \ref{thres}. Namely, the function $K(z)$ is a scalar function in this situation, and the condition
 $$
 |K(z)|\geq N^{-\alpha},
 $$
 which constitutes the set $\widetilde{ \mathbf{S}}_N$, locates with high probability the eigenvalues in the (approximate) discs around the points $z_0$ and with radius equal to $N^{-\beta}$, for $\beta<\alpha$, while  Theorem \ref{spectrum} already states that $\beta=\alpha$.

However, already for $n=2$ the estimates given by Theorem \ref{spectrum} are  weaker, we will see this more clearly in  Section \ref{s3}. The main reason for stating Theorem \ref{spectrum}, despite it is giving a weaker estimate, is that it allows us in some situations to count the eigenvalues, while Theorem \ref{thres} does not even guarantee that inside each connected component of the complement of $\widetilde{\mathbf{S}}_N$  there is any eigenvalue of $X_N+A_N$. Therefore, in what follows, we will use Theorem \ref{thres} to get the optimal convergence rate and Theorem \ref{spectrum} to get the number of eigenvalues which converge to $z_0$.
\end{remark}

\section{Random perturbations of matrices}\label{s3}
In this section we will consider the situation where  $A_N(z)=A_N$ is a matrix  and  $X_N(z)=X_N-zI_N$.
In this subsection, like in Theorem \ref{thres}, neither $n$ nor $C$ depend on $N$.

\begin{theorem}\label{matrixmain}
Let $n$ be fixed and let
$$
C\in\Comp^{n\times n},\quad A_N:= P_NCQ_N\in\Comp^{N\times N},
$$
be deterministic  matrices, where  $P_N\in\Comp^{N\times n}$, $Q_N\in\Comp^{n\times N}$, $N=1,2,\dots$
Let $X_N\in\Comp^{N\times N}$ be a random matrix. We assume that

 \begin{itemize}
 \item[(a1.2)] $X_N$ is either a  Wigner matrix from Example \ref{Wigner} or a random sample covariance matrix from Example \ref{MP}, so that the resolvent of $X_N-zI_N$ has a limit law $m_{\x}(z) I_N$ on the family of sets $\mathbf S^{\x}_{N,\omega}$ with the rate $\Psi^{\x}_N(z)$, where $\x\in\set{\W,\MP}$, respectively, and let $\mathbf{T}$ be a compact set that does not intersect the real line.
\item[(a2.2)] $C$ is invertible,
\item [(a4)]  $\norm{P_N}_2,\norm{Q_N}_2,c(P_N),r(Q_N)\leq \const$,
\item[(a5.2)] the matrix $D:=CQ_NP_N$
 is independent from $N$.
 \end{itemize}

Then the eigenvalues  of $X_N+A_N$ are with high probability outside the set
\begin{equation}\label{Smatrix}
\widetilde{\mathbf  S}^{\x}_{N,\omega}:=\set{ z\in\mathbf  S^{\x}_{N,\omega} :  \min_{\xi\in\sigma(D)} | 1+ \xi m_{\x}(z)  |^{p_\xi}\geq  N^{-\tau\omega}  }
\end{equation}
and
\begin{equation}\label{Tmatrix}
\widetilde{\mathbf{T}}_N:=  \set{z\in\mathbf  T :  \min_{\xi\in\sigma(D)} | 1+ \xi m_{\x}(z)  |^{p_\xi} \geq  N^{-\rho}}  ,\end{equation}
  where $\rho<\tau<\frac12$, $p_\xi$ denotes the size of the largest block corresponding to $\xi$ and $\sigma(D)$ is the set of eigenvalues of $D$.
The resolvent of the polynomial $A_N+X_N-zI_N$ has on $\widetilde{\mathbf{ S}}^{\x}_{N,\omega}$ and $\widetilde{\mathbf T}_N$ the following  limit law
$$
\widetilde{M}_N(z)=\big [ m_{\x}(z)I_N-m_{\x}^2(z)P_N(C^{-1}+m_{\x}(z)Q_NP_N)^{-1}Q_N\big],
$$
with the rates
$$
N^\frac\omega2 \Psi^{\x}_N(z)\quad \text{ and } \quad  N^\tau \Psi^{\x}_N(z),
$$
respectively.

Furthermore, if $z_0\in\Comp\setminus\Real$ is such that $\xi=-\frac 1{m_{\x}(z_0)}$ is an eigenvalue of
 $D$ with algebraic multiplicity $k_\xi$ and the size of the largest Jordan block is equal to $p_\xi$, then the $k_\xi$ eigenvalues $\lambda_{1}^N,\dots,\lambda^N_{k_\xi}$ of $X_N+A_N$ closest to $z_0$ are simple and converge to $z_0$ in the following sense
\begin{equation}\label{matrixconv}
|\lambda_{l}^N-z_0|\prec N^{-\frac1{2 p_\xi}},\quad l=1,\dots, k_\xi,
\end{equation}
  provided that the independent random variables constituting the matrix $X_N$, i.e. $W_{ij}$ in Example \ref{Wigner} or, respectively, $Y_{ij}$ in Example \ref{MP}, have continuous distributions.
\end{theorem}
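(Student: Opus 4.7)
The plan is to apply Theorems \ref{thres} and \ref{spectrum} to the pair $X_N(z) = X_N - zI_N$ and $A_N(z) = A_N$, where the local limit law of $X_N(z)^{-1}$ is the isotropic law $m_{\x}(z)I_N$ from Example \ref{Wigner} or \ref{MP}. First I would check that all hypotheses transfer: (a1), (a2), (a4) are immediate from (a1''), (a2''), (a4); (a3) is satisfied with $\alpha = \omega/2$ on $\mathbf{S}^{\x}_{N,\omega}$ and with $\alpha$ arbitrarily close to $1/2$ on the compact $\mathbf{T}$, thanks to \eqref{ratesup1}--\eqref{ratesup2MP}; and (a5') holds because, by (a5''), $Q_N M_N(z) P_N = m_{\x}(z) C^{-1} D$ is independent of $N$ and analytic where $m_{\x}$ is.

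The key computation is the estimate of $\norm{K_N(z)^{-1}}_2$, where
$$
K_N(z) = C^{-1} + m_{\x}(z) Q_N P_N = C^{-1}\bigl(I_n + m_{\x}(z) D\bigr).
$$
I would put $D$ in Jordan form $D = UJU^{-1}$. On a single Jordan block at $\xi$ of size $k$, writing $J_\xi = \xi I + N$ with $N^k = 0$, one obtains
$$
(I + m_{\x} J_\xi)^{-1} = \frac{1}{1+m_{\x}\xi}\sum_{j=0}^{k-1}\left(-\frac{m_{\x} N}{1+m_{\x}\xi}\right)^{\!j},
$$
whose norm is bounded by $\mathrm{const}/|1+m_{\x}\xi|^k$ since $|m_{\x}(z)|$ is bounded on both $\mathbf{S}^{\x}_{N,\omega}$ and $\mathbf{T}$. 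Taking the maximum over blocks yields
$$
\norm{K_N(z)^{-1}}_2 \le \mathrm{const}\cdot\max_{\xi\in\sigma(D)}\frac{1}{|1+m_{\x}(z)\xi|^{p_\xi}}.
$$
This inclusion shows that $\widetilde{\mathbf{S}}^{\x}_{N,\omega}$ and $\widetilde{\mathbf{T}}_N$ are contained in the set $\{\norm{K_N^{-1}}<N^{\beta'}\}$ of Theorem \ref{thres} for a slightly adjusted exponent $\beta'$, which is absorbed by the $N^\eps$-slack of stochastic domination. Theorem \ref{thres} then delivers the resolvent inclusion, the local limit law $\widetilde{M}_N$ of the prescribed form \eqref{resX}, and the announced rates (choosing $\alpha$ just above $\beta$ in the compact-$\mathbf{T}$ case so that the required smallness of $\widetilde{\Psi}_N$ is preserved).

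For the eigenvalue statement I combine both theorems. Factoring $\det K(z) = \det(C^{-1})\prod_{\xi\in\sigma(D)}(1+\xi m_{\x}(z))^{k_\xi}$ and using $m_{\x}'(z_0)\neq 0$ (for Wigner, $m_{\W}'(z)=0$ would force $z=\sqrt{z^2-4}$, impossible off $[-2,2]$; similarly for Marchenko--Pastur on the complement of the support), the function $\det K$ has a zero of order exactly $k_\xi$ at $z_0$. Theorem \ref{spectrum} then guarantees that precisely $k_\xi$ eigenvalues of $X_N+A_N$ cluster around $z_0$, though only with the weaker rate $N^{-1/(2k_\xi)}$. The sharper rate $N^{-1/(2p_\xi)}$ is extracted from Theorem \ref{thres}: all eigenvalues lie outside $\widetilde{\mathbf{T}}_N$ with high probability, and near $z_0$ only the factor associated with $\xi_0 = -1/m_{\x}(z_0)$ is small, so $|1+\xi_0 m_{\x}(z)|^{p_\xi}\prec N^{-\beta}$; the Taylor expansion $1+\xi_0 m_{\x}(z) = \xi_0 m_{\x}'(z_0)(z-z_0)+O((z-z_0)^2)$ converts this to $|z-z_0|\prec N^{-\beta/p_\xi}$, and letting $\beta\uparrow 1/2$ gives \eqref{matrixconv}. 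Simplicity of the limiting eigenvalues comes from the standard fact that, under continuous distribution of the entries of $X_N$, the discriminant of $\det(X_N+A_N-zI_N)$ is a nontrivial polynomial in those entries and hence nonzero a.s.

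The main technical hurdle is the Jordan-block analysis of $K_N^{-1}$: it is essential that the norm of the resolvent of a Jordan block blows up like $|1+m_{\x}\xi|^{-p_\xi}$ rather than $|1+m_{\x}\xi|^{-k_\xi}$. This, coupled with the Taylor expansion of $m_{\x}$ at $z_0$, is precisely what pushes the size $p_\xi$ of the largest block into the denominator of the convergence rate, improving on what Theorem \ref{spectrum} alone could provide.
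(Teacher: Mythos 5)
Your proposal is correct and follows essentially the same route as the paper's proof: apply Theorem \ref{thres} with the Jordan-block estimate $\norm{K_N(z)^{-1}}_2 \le \const\max_{\xi\in\sigma(D)}|1+\xi m_{\x}(z)|^{-p_\xi}$ to get the resolvent inclusion, local law and rates, then Theorem \ref{spectrum} for the count of eigenvalues near $z_0$, the shape of $\widetilde{\mathbf{T}}_N$ together with the nonvanishing of $\frac{d}{dz}(1+\xi m_{\x}(z))$ at $z_0$ for the improved rate $N^{-1/(2p_\xi)}$, and a resultant/discriminant genericity argument for simplicity. The only point where the paper is more explicit is in proving that the discriminant (Sylvester resultant of $p,p'$) is a \emph{nontrivial} polynomial in the entries of $Y$ (resp. $W$) --- it exhibits a witness via a diagonal $Y^*Y$ with well-separated entries and the Bauer--Fike theorem, and also notes that simplicity of the eigenvalues of $X_N+A_N$ forces the zeros of $\det L_N$ to be simple --- details you pass over as standard.
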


\begin{proof}
\emph{ I: Limit law for the resolvent}

First we prove that the resolvent of $X_N+A_N$ contains with high probability the set $\mathbf{S}_{N,\omega}^{\x}$. We fix $\tau< \frac 12$ and let $\tau'\in(\tau,1/2)$. Let us note that the assumptions (a1)--(a4) of Theorem \ref{thres}  are satisfied with $\alpha=\omega/2$ and $\beta=\tau'\omega<\alpha$. With $K(z)$ as in Theorem \ref{thres} and $D=S J_D S^{-1}$, where $S$ is invertible and $J_D$ is in Jordan normal form, one has
\begin{eqnarray*}
\norm{K(z)^{-1}}_2&=&\norm{ (C^{-1} + m_{\x}(z)Q_NP_N )^{-1}}_2\\
&\leq&\norm{C}_2 \norm{ (I_n+ m_{\x}(z)D)^{-1}}_2\\
&\leq&\norm{C}_2 \norm{S}_2\norm{S^{-1}}_2 \norm{ (I_n+ m_{\x}(z)J_D)^{-1}}_2\\
&\leq & \norm{C}_2 \norm{S}_2\norm{S^{-1}}_2  n \maxnorm{ (I_n+ m_{\x}(z)J_D)^{-1}} .
\end{eqnarray*}
Note that $(I_n+ m_{\x}(z)J_D)^{-1}$ is a block-diagonal matrix with blocks corresponding to possibly different eigenvalues $\xi$ of $D$, of possibly different sizes $r$, of the form
$$
\frac 1{m_{\x}(z)} \matp{ s&1 &  &  \\  & \ddots & \ddots &  \\   &  & \ddots & 1 \\       & & & s }^{-1}
    = \frac 1{m_{\x}(z)} \matp{s^{-1} &-s^{-2}  & \dots & (-1)^{r+1}s^{-r} \\  & s^{-1}  & \dots & (-1)^{r}s^{-r+1} \\  &  & \ddots & \vdots \\  &  &  & s^{-1}}\in\Comp^{r\times r},
    $$
    where     $s=\frac{1+\xi m_{\x}(z)}{m_{\x}(z)}$ and the non-indicated entries are zeros.
Hence,
\begin{eqnarray*}
\maxnorm{ (I_n+ m_{\x}(z)J_D)^{-1}}
&\leq & \frac1{|m_{\x}(z)|} \max_{\xi\in\sigma(D)} \max_{j=1,\dots,p_\xi} \left| \frac{m_{\x}(z)}{1+\xi m_{\x}(z)}\right|^j\\
&\leq &  \max\set{1,|m_{\x}(z)|^{n-1}} \max_{\xi\in\sigma(D)} \max\set{1, \left| \frac{1}{1+\xi m_{\x}(z)}\right |^{p_\xi}}.\\
\end{eqnarray*}

As $m_{\x}(z)$ is bounded on $\bigcup_N\mathbf  S^{\x}_{N,\omega}$ we can apply estimates \eqref{ratesup1} and \eqref{ratesup1MP} and Theorem \ref{thres}  with $\alpha=\frac\omega2$  and get that the eigenvalues of $X_N+A_N$ are  with high probability outside the set
$$
\bigg\{z\in\mathbf{ S}^{\x}_{N,\omega}: {\norm{K_N(z)^{-1}}_2}<N^{\tau' \omega}\bigg\},
$$
which in turn contains the following sets
    \begin{eqnarray*}
        &&  \bigg\{z\in\mathbf{ S}^{\x}_{N,\omega}: \max_{\xi\in\sigma(D)} \max\set{1, \left| \frac{1}{1+\xi m_{\x}(z)}\right |^{p_\xi}} < \frac{N^{\tau'\omega}}  \delta  \bigg\}\\
           &\supseteq&  \bigg\{z\in\mathbf{ S}^{\x}_{N,\omega}: \max_{\xi\in\sigma(D)} \max\set{1, \left| \frac{1}{1+\xi m_{\x}(z)}\right |^{p_\xi}} < N^{\tau\omega}  \bigg\}\\
    &=&  \bigg\{z\in\mathbf{ S}^{\x}_{N,\omega}: \min_{\xi\in\sigma(D)} |1+\xi m_{\x}(z) |^{p_\xi} > N^{-\tau\omega}    \bigg\},
    \end{eqnarray*}
   where $\delta$ is an appropriate constant and  $N>N_0(\tau,\tau',\delta,\omega)$ is  sufficiently large.

  The formula for  the  limit law $\widetilde M_N(z)$  follows straightforwardly, let us discuss now the convergence rate. First consider the case
  $\mathbf{S}_N=\mathbf{S}^{\x}_{N,\omega}$ and apply  Theorem \ref{thres}.
  For this aim we need to check the additional assumption that $\widetilde{\Psi}_N(z)=N^{\frac\omega 2}\Psi_N^\star(z)$ converges to zero for each $z$.
  This is, however, satisfied due to $\Psi_N^\star(z)=\mathcal{O}(N^{-1/2})$ and $\omega<1$.

Now consider the case
 $\mathbf{S}_N=\mathbf{T}$, we apply Theorem \ref{thres} with $\alpha=\tau$ and $\beta=\rho$. Repeating the arguments from the previous case (with  \eqref{ratesup2}, \eqref{ratesup2MP} used instead of \eqref{ratesup1} and \eqref{ratesup1MP}) we see that  the eigenvalues of $X_N+A_N$ are with high probability outside $\widetilde{\mathbf T}_N$.
The formula for  the  limit law $\widetilde M_N(z)$  follows straightforwardly, to see the convergence rates observing that the additional assumption on convergence of $\widetilde\Psi_N(z)$ is satisfied due to $\tau<1/2$.\\

\emph{II: Eigenvalues (`Furthermore' part).}
First note that  due to the fact that $m_{\x}'(z_0)\neq 0$ the function
 $$
 \det K(z)= \det C^{-1} \det( I_n + m_{\x}(z) D  )
 $$
 has a zero of order $k_\xi$ at $z_0$. By Theorem \ref{spectrum}, $\det L_N(z)$ has exactly $k_\xi$ zeros, counting with multiplicities, $\lambda_1^N,\dots,\lambda_{k_\xi}^N$, that  converge to $z_0$ as
 \begin{equation}\label{matrixconv2}
|\lambda_{l}^N-z_0|\prec N^{-\alpha},\quad l=1,\dots, k_\xi
\end{equation}
 with some $\alpha>0$.
  Each of these zeros is an eigenvalue of $X_N+A_N$ converging to  $z_0$.
 We show now that $\alpha=\frac1{2p_\xi}$. Let us fix a compact set $\mathbf T$, not intersecting the real line, and such that $z_0$ is in the interior of $\mathbf T$. As $\frac{d(1+m_{\x}(z)\xi)}{dz} (z_0)\neq0$ one gets immediately from \eqref{matrixconv2} and the form of the set $\widetilde{\mathbf{  T}}_N$ in \eqref{Tmatrix} that $\alpha\geq \tau/p_\xi$ with arbitrary $\tau<\frac 12$.  Hence, \eqref{matrixconv2} holds with $\alpha=\frac1{2p_\xi}$ as well.

 Let us see that the zeros of $L(z)$ are almost surely simple. Note that
 $
 \det L(z)=\det C^{-1} \frac {\det(X_N-zI+A)}{\det(X_N-zI)}
 $
 is a rational function and if it has a double zero then  $\det(X_N-zI+A)$  has a double zero. However, it is a well-known fact that the eigenvalues of the random matrix with the continuously distributed entries are almost surely simple, see, e.g.,  \cite[Exercise 2.6.1]{taobook}.
Thus the  zeros $\lambda_1^N,\dots,\lambda_{k_\xi}^N$ are almost surely mutually different.

\end{proof}

\begin{remark}
The Theorem above may be easily generalised to the situation where the resolvent of $X_N-zI_N$ has a  limit law of the form $\mu(z)I_N$ with $\mu(z)$ being a Stieltjes transform of a probability measure. The only exception is the counting of the eigenvalues in \eqref{matrixconv}, namely it is not true that the eigenvalues converging to $z_0$ need to be simple and that their number has to be precisely $k_\xi$.
 The proof of this generalisation  follows exactly the same lines except the last two paragraphs. 
\end{remark}

\begin{example}\label{Wignerplusi}
Let $X_N$ be the Wigner matrix as in Example \ref{Wigner}. For the simulations Wigner matrices with real Gaussian entries were used. We use the notation from Theorem \ref{matrixmain}.
We compare the convergence rates of eigenvalues for the following four instances of the matrix $C$
$$
C^{(1)}=[8\ii],\quad C^{(2)}=\diag(8\ii,8\ii,8\ii),\quad C^{(3)}=\matp{8\ii & 1& 0\\ 0 & 8\ii & 1\\ 0 & 0 & 8\ii},\quad C^{(4)}=[2\ii],
$$
with
$$
Q_N^{(1)}=[I_n, 0_{n,N-n} ],\  P_N^{(1)}=Q_N^{(1)*},\quad A^{(j)}_N=P_N^{(1)}C^{(j)}Q_N^{(1)},\quad  j=1,2,3,4.
$$
 We have  $D^{(j)}=C^{(j)}$, $j=1,2,3,4$ and $\xi^{(j)}=8\ii$, $j=1,2,3$ and, $\xi^{(4)}=2\ii$. It is a matter of a straightforward calculation that the only solution of $1+\xi^{(j)} m_{\W}(z)=0$ equals $63\ii/8$ for $j=1,2,3$ and $3\ii/2$ for $j=4$. A sample set $\mathbf{\tilde{S}}_N$ and the spectrum of $X_N+A_N^{(4)}$ are plotted in Figure \ref{F0}.
According to Theorem \ref{matrixmain},  the rate of convergence is (simplifying the statement slightly) $N^{-\frac12}$ in the $C^{(1)}$, $C^{(2)}$, and $C^{(4)}$ case, and $N^{-\frac16}$ in the $C^{(3)}$ case.
The graphs of
$$
\delta(N):=\max_{j=1,\dots, k_\xi}  |\lambda^N_j -z_0|
$$
are presented in Figure \ref{F1}. Note that the log-log plot support the conjecture that the exponents in estimates of the convergence rates cannot be in practice improved. Namely, in all four cases the slope of the corresponding group of points are approximately ${-1/2}$ in cases  (1), (2) and (4) and $-1/6 $  case (3).
Furthermore, it is visible that  while the exponent in the rate of convergence in the cases (1), (2) and (4) is the same, $ N^{-\frac12}$, these rates may differ by a constant. This is visible as a vertical shift of the graphs in corresponding to cases (1), (2) and (4).

\begin{figure}
\includegraphics[width=250pt]{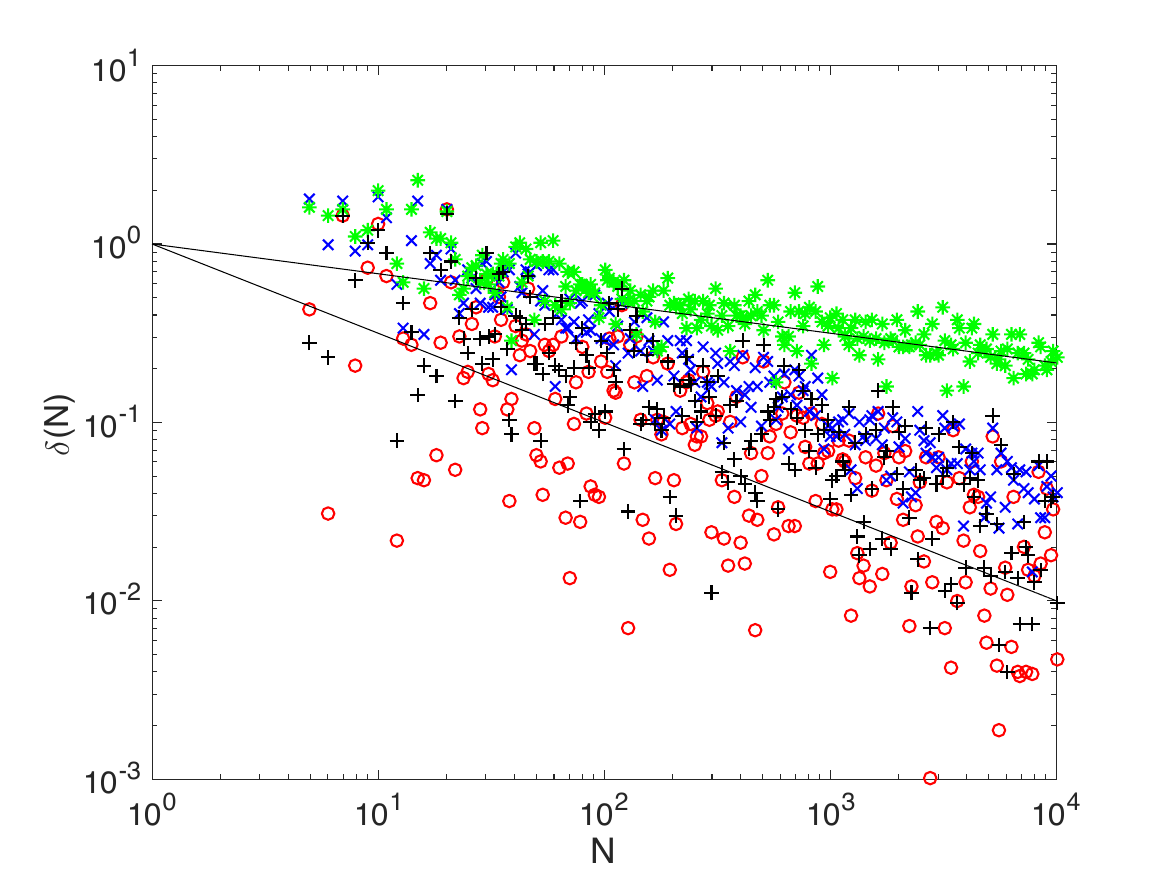}
\caption{The log-log plots for $C^{(1)}$  (red circle), $C^{(2)}$ (blue cross), $C^{(3)}$ (green star) and $C^{(4)}$ (black plus),  see Example \ref{Wignerplusi}. The plots of $N^{-1/2}$ and $N^{-1/6}$  are marked with black lines for reference. }\label{F1}
\end{figure}
\end{example}

\begin{figure}
\includegraphics[width=250pt]{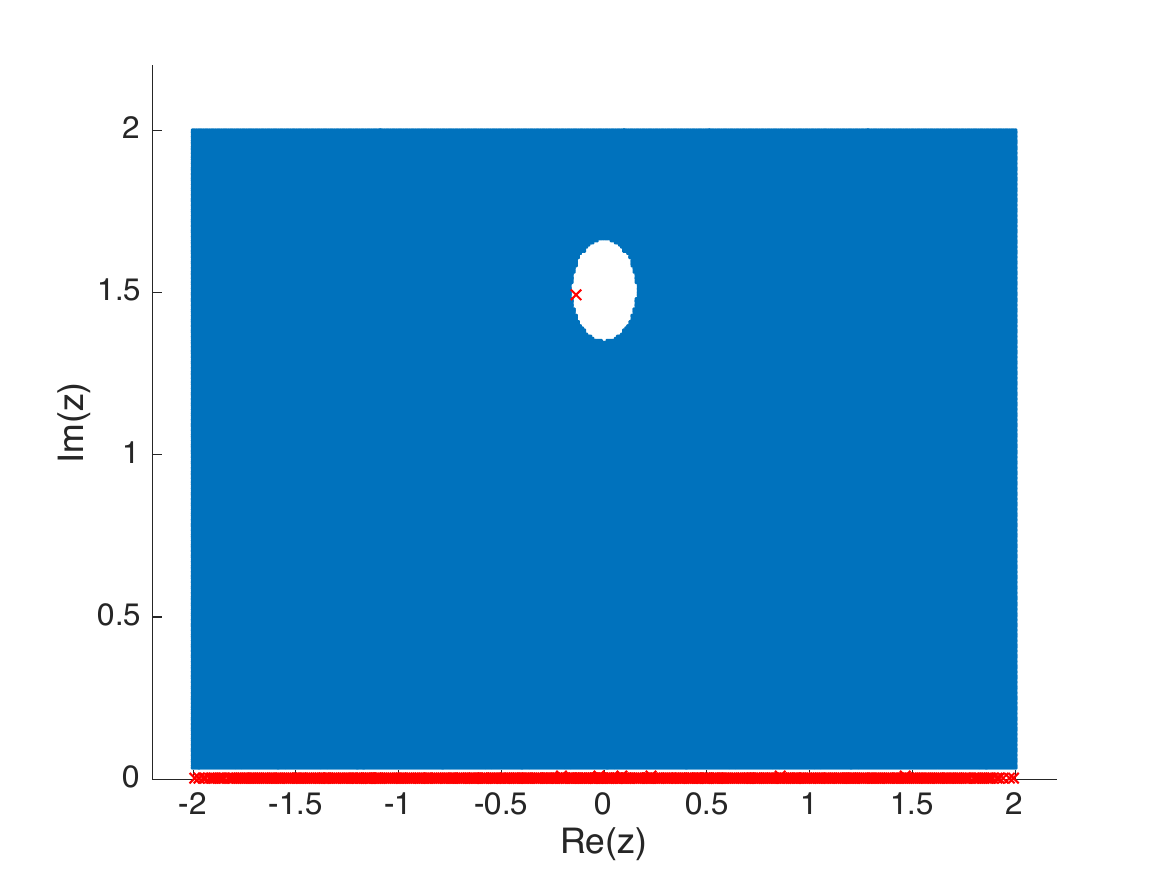}
\caption{The set  $ \widetilde{\mathbf  S}^{W}_{1000}$  (in blue) and the spectrum of $W_{1000}+A_{1000}^{(4)}$ (red crosses)  from Example \ref{Wignerplusi}.  }\label{F0}
\end{figure}

\begin{figure}
\includegraphics[width=250pt]{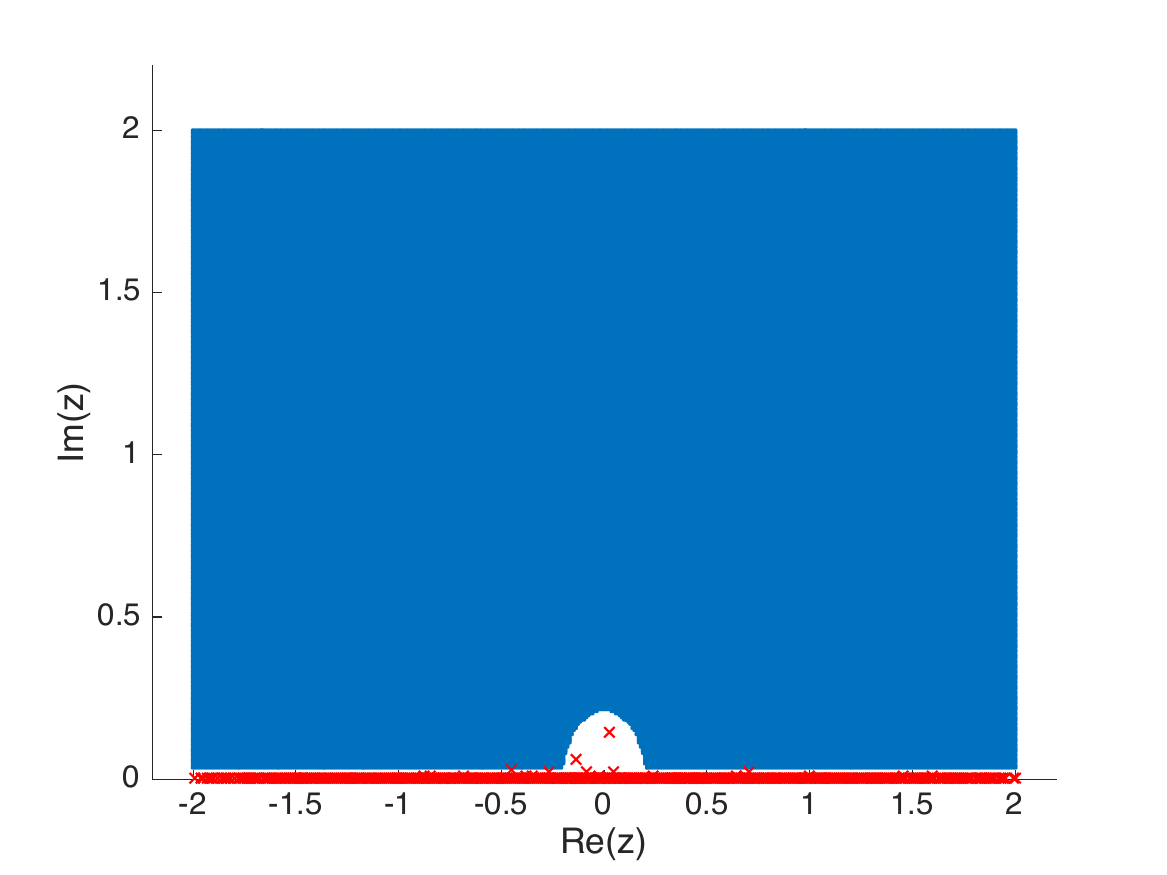}
\caption{The set  $\tilde{\mathbf{S}}^{W}_{1000,\omega}$ (in blue) and the spectrum of $W_{1000}+A_{1000}^{(5)}$ (red crosses)  from Example \ref{Wignerplusi2}.  }\label{F0b}
\end{figure}

Let us formulate now a direct corollary from Theorem \ref{matrixmain}.

\begin{corollary}\label{QP=0}
If, additionally to the assumptions of Theorem \ref{matrixmain}, $Q_NP_N=0$,
then the eigenvalues  of $X_N+A_N$ are with high probability outside the set $\mathbf{S}^{\x}_{N,\omega}$  and the resolvent of $X_N+A_N$ has on $\mathbf{S}^{\x}_{N,\omega}$ the following limit law
$$
\widetilde{M}_N(z)=\big [ m_{\x}(z)I_n-m_{\x}^2(z)A_N \big],
$$
with the rate
$$
\widetilde{\Psi}^{\x}_N(z):= N^\frac12 \Psi^{\x}_N(z).
$$
\end{corollary}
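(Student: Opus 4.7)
The plan is to observe that the hypothesis $Q_NP_N=0$ trivialises the Jordan structure of the matrix $D=CQ_NP_N$ appearing in Theorem \ref{matrixmain}, so both the defining condition of the set $\widetilde{\mathbf{S}}^{\x}_{N,\omega}$ and the inversion inside the formula for $\widetilde{M}_N(z)$ collapse to something trivial. Concretely, under the additional assumption we have $D=0$, so $\sigma(D)=\{0\}$, and there is only one eigenvalue $\xi=0$ to worry about, with Jordan block size $p_0\leq n$.

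Next I would verify that the set of good spectral parameters is the full set. For $\xi=0$ the quantity $|1+\xi m_{\x}(z)|^{p_\xi}$ defining $\widetilde{\mathbf{S}}^{\x}_{N,\omega}$ in \eqref{Smatrix} equals $1$ identically, and $1\geq N^{-\beta\omega}$ holds for every $N$ and every $\beta<\tfrac12$. Hence $\widetilde{\mathbf{S}}^{\x}_{N,\omega}=\mathbf{S}^{\x}_{N,\omega}$, and Theorem \ref{matrixmain} already asserts that the resolvent set of $X_N+A_N$ contains this set with high probability.

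It then remains to simplify the limit kernel. The matrix $C^{-1}+m_{\x}(z)Q_NP_N$ reduces, under $Q_NP_N=0$, to $C^{-1}$, whose inverse is $C$. Plugging this into the formula given by Theorem \ref{matrixmain} yields
\[
\widetilde{M}_N(z)=m_{\x}(z)I_N - m_{\x}^2(z)\,P_N C Q_N = m_{\x}(z)I_N - m_{\x}^2(z)A_N,
\]
which is the announced local limit law. The convergence rate $\widetilde{\Psi}^{\x}_N(z)=N^{\omega/2}\Psi^{\x}_N(z)$ (resp.\ $N^{\beta}\Psi^{\x}_N(z)$ on a compact $\mathbf T$ off the real axis, with arbitrary $\beta<\tfrac12$) is inherited verbatim from Theorem \ref{matrixmain}.

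There is essentially no obstacle here: the whole point of the corollary is to display the particularly clean form that the general machinery takes when the perturbation is idempotent-free in the sense that $Q_NP_N=0$. The only minor care is in reading the exponent in the rate: Theorem \ref{matrixmain} delivers the exponent $\omega/2$ on $\mathbf{S}^{\x}_{N,\omega}$ for every $\omega\in(0,1)$, which is what the corollary abbreviates as $N^{1/2}\Psi^{\x}_N(z)$.
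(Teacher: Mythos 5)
Your proposal is correct and follows exactly the intended route: the paper offers no separate argument for this corollary (it is labelled a direct consequence of Theorem \ref{matrixmain}), and your specialization $D=CQ_NP_N=0$, which makes the defining condition of $\widetilde{\mathbf S}^{\x}_{N,\omega}$ vacuous and collapses $K_N(z)^{-1}$ to $C$, is precisely the deduction being invoked. Your remark that the stated rate $N^{1/2}\Psi^{\x}_N(z)$ is just the uniform-in-$\omega$ (hence slightly weaker) bound for the exponents $N^{\omega/2}$ delivered by the theorem is also the right reading.
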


\begin{example}\label{Wignerplusi2}
In this example we will compare the convergence of the eigenvalues of $X_N+A_N$ to the real axis in three different situations. Here $X_N$ is again a Wigner matrix. First let us take the matrix  $A^{(1)}_N=P_N^{(1)} C^{(1)}Q_N^{(1)}$  from Example \ref{Wignerplusi}.
In this situation there is one  eigenvalue $\lambda_1^N$ of $X_N+A_N^{(1)}$ converging to $z_0=63\ii/8$, cf. Example \ref{Wignerplusi}, and the set
 $\widetilde{\mathbf S}_{N,\omega}^{\W}$ is for large $N$ a rectangle with an (approximately) small disc around  $z_0$ removed, similarly as for $C^{(4)}$ in Figure \ref{F0}.
  The other eigenvalues converge to the real line with the rate $N^{-1}$, i.e. $\Delta(N)\prec N^{-1}$ where
 \begin{equation}\label{Delta1}
\Delta(N)=\max\set{ | \IM \lambda| : \lambda\in\sigma(X_N+A_N)\setminus \set{ \lambda_{1}^N}}.
\end{equation}
By definition of $\mathbf{S}_{N,\omega}^{\W}$ one can see that $\Delta(N)\prec N^{-1+\omega}$ with the arbitrary parameter $\omega\in(0,1)$. However, by the definition of stochastic domination, it means that $\Delta(N)\prec N^{-1}$, see Remark \ref{whyN^-1}.
The numbers $\Delta(N)$ are plotted in Figure \ref{F2}. One can observe that the plot bends in the direction of the the line given by $N^{-1}$, which is still  in accordance with the definition of stochastic domination.

The second situation to consider is $A^{(5)}_N=P_N^{(1)} C^{(5)}Q_N^{(1)}$ with
$$
C^{(5)}= [i], \quad Q^{(1)}_N=[I_1, 0_{1,N-1} ], P^{(1)}_N=Q_N^*
$$
In this situation the equation $1+\xi m_{\W}(x)$, with $\xi=\ii$, has no solutions in $\Comp\setminus[-2,2]$. However,  $z_0=0$ can be seen as a solution,  if we define $m_{\W}(0)$ as $\lim_{y\downarrow 0} m_{\W}(y\ii )=1i$. Hence,  the set   $\widetilde{\mathbf S}^{\W}_N$ is a rectangle with an (approximately) half-disc around $z_0=0$ removed, see Figure \ref{F0b}. The half disc has radius of order $N^{-\frac12}$, hence
 \begin{equation}\label{Delta2}
\Delta(N)=\max\set{ | \IM \lambda| : \lambda\in\sigma(X_N+A_N)}
\end{equation}
converges to zero with the rate $N^{-\frac12}$, which can be seen in Figure \ref{F2}.

The last situation to consider is $A^{(6)}_N=P_N^{(2)} C^{(5)}Q_N^{(2)}$ with
$$
C^{(5)}= [i], \quad Q^{(2)}_N=[0,1, 0_{1,N-2} ], P^{(2)}_N=[1,0,0_{1,N-2}]^\top.
$$
Here, according to Corollary \ref{QP=0} the sets $\mathbf{S}^{\W}_N$ and $\widetilde{\mathbf S}^{\W}_N$ coincide.  Hence, all the eigenvalues converge to the real axis with the rate $N^{-1}$, the same comments concerning $\omega$ as in the $C^{(4)}$ case above apply. The plot of $\Delta(N)$, defined as in \eqref{Delta2}, can be seen in Figure \ref{F2}.

\begin{figure}
\includegraphics[width=250pt]{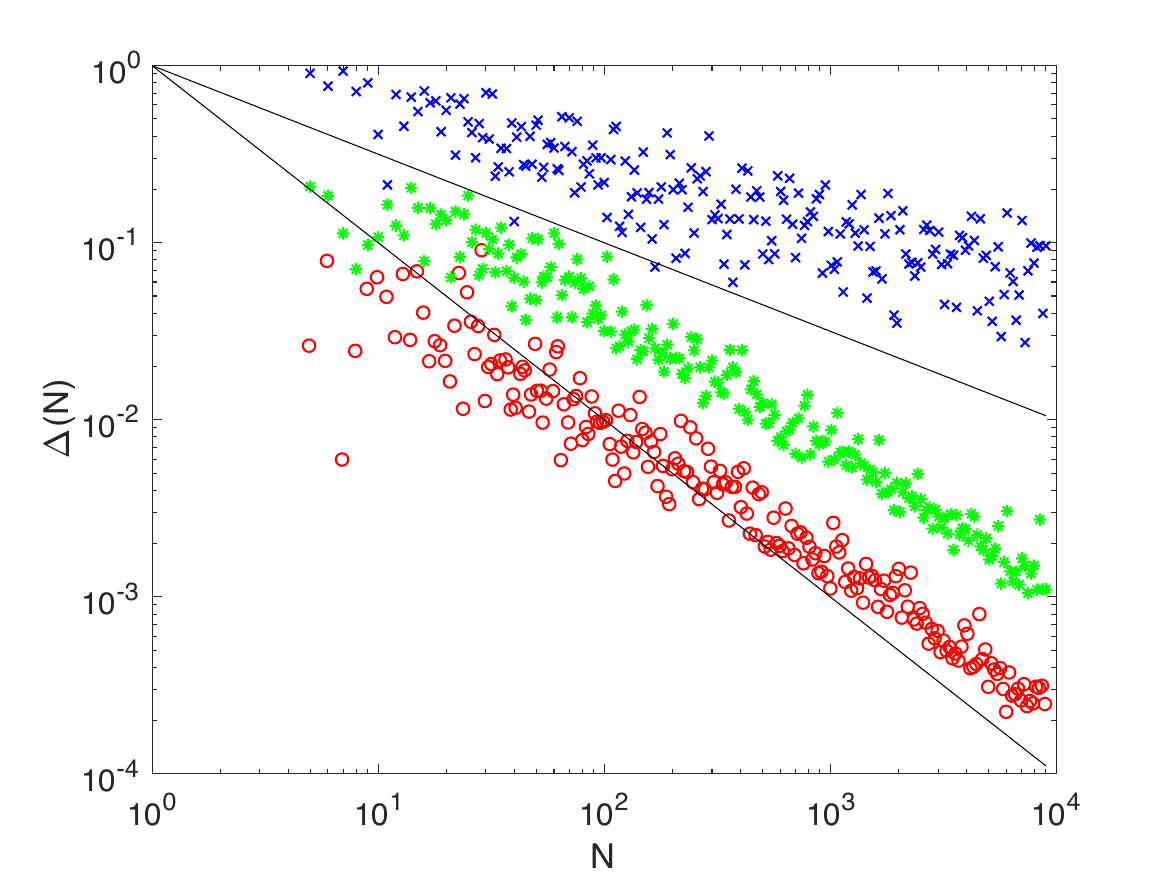}

 \caption{ The log-log plots of $\Delta(N)$ for $A_N^{(1)}$   (red circle), $A_N^{(5)}$ (blue cross) and $A_N^{(6)}$ (green star),  see Example \ref{Wignerplusi2}. The plots of $N^{-1}$  and $N^{-1/2}$  are marked with black lines for reference. }\label{F2}
\end{figure}
As in Example \ref{Wignerplusi}, the plot suggest that the exponent in the convergence rate  cannot be improved in the discussed examples.

\end{example}

The next corollary will concern the class of port Hamiltonian matrices, i.e. matrices of the form $A-Z$, where $A=-A^*$ and $Z$ is positive definite. This class has recently gathered some interest \cite{mehl2016stability,MehMS16b} due to its role in mathematical modelling. Clearly, the spectrum of $A-Z$ lies  in the closed left half-plane. We will consider below the case where $A=C\oplus 0_{N-n,N-n}$ is a nonrandom matrix with $n$  fixed
and $Z=Y^*Y$ is the random sample covariance matrix.
For the sake of simplicity we will take a square random sample covariance matrix ($N=M$).

\begin{corollary}
Let $Z_N=Y_N^*Y_N\in \Comp^{N\times N}$ be a random sample covariance matrix from Example \ref{MP} with $M=N$. Let $n>0$ be fixed and let $A_N=C\oplus 0_{N-n,N-n}$, where $C\in \Comp^{n\times n}$ is a skew-symmetric matrix $C=-C^*$ with nonzero eigenvalues $\ii t_j$ with algebraic multiplicities, respectively, $k_j$ $(j=1,\dots r,\ k_1+\dots +k_r=n)$.
Let
$$
z_{j} := - \frac{t_j^2}{1+\ii t_j},\quad j=1,\dots r.
$$
Then, for any $j=1,2,\dots,r$, the  $k_j$ eigenvalues $\lambda_{j,1},\lambda_{j,2},\dots,\lambda_{j,k_j}$ of $A_N-Z_N$ converge in probability to $z_j$  as
$$
|\lambda_{j,l}^N-z_j|\prec N^{-\frac12},
$$
where $l\in\{1,2,\dots,k_j\}$.
\end{corollary}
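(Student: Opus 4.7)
The plan is to apply Theorem~\ref{matrixmain} to $Z_N + (-A_N) = Z_N - A_N$ and then reflect the spectrum about the origin, since the eigenvalues of $A_N - Z_N$ are the negatives of those of $Z_N - A_N$. To this end I would take $P_N = \matp{I_n \\ 0_{N-n,n}} \in \Comp^{N\times n}$ and $Q_N = P_N^* \in \Comp^{n\times N}$, so that $-A_N = P_N(-C)Q_N$, $Q_N P_N = I_n$, and hence $D = (-C)Q_NP_N = -C$, independent of $N$. Conditions (a1'')--(a5'') of Theorem~\ref{matrixmain} are then immediate: $Z_N$ is a Marchenko--Pastur matrix with $\phi = 1$; $-C$ is invertible because its eigenvalues $-\ii t_j$ are all nonzero; and $\norm{P_N}_2 = \norm{Q_N}_2 = c(P_N) = r(Q_N) = 1$.

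Since $C = -C^*$ is normal, $D = -C$ is diagonalizable, so every Jordan block has size one: $p_\xi = 1$ for each $\xi \in \sigma(D) = \{-\ii t_j\}_{j=1}^r$ with corresponding algebraic multiplicities $k_j$. Theorem~\ref{matrixmain} therefore yields the rate $N^{-1/(2p_\xi)} = N^{-1/2}$ for the $k_j$ eigenvalues of $Z_N - A_N$ closest to any point $z_0 \in \Comp\setminus\Real$ satisfying $1 + (-\ii t_j)\, m_{\MP}(z_0) = 0$. To pin down $z_0$ explicitly, I would use that with $\phi = 1$ the Stieltjes transform $m_{\MP}$ satisfies the quadratic relation $zm^2 + zm + 1 = 0$. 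Substituting $m = -\ii/t_j$ (extracted from $1 - \ii t_j m = 0$) gives $z_0(-t_j^{-2} - \ii t_j^{-1}) = -1$, whence $z_0 = t_j^2/(1+\ii t_j)$; a quick sign check confirms this is the correct branch of $m_{\MP}$ (the imaginary parts of $m$ and $z_0$ have opposite signs, matching the Stieltjes transform convention after multiplication by $z_0$), and $t_j \in \Real \setminus \{0\}$ guarantees $z_0 \in \Comp \setminus \Real$.

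Reflecting back, the $k_j$ eigenvalues of $A_N - Z_N$ closest to $z_j = -z_0 = -t_j^2/(1+\ii t_j)$ converge at rate $N^{-1/2}$, as claimed. The only genuine computation in the argument is the algebraic inversion of the Marchenko--Pastur Stieltjes transform, which is disarmed by the quadratic relation above; everything else is a direct application of Theorem~\ref{matrixmain} with $X_N$ and $D$ as specified, combined with the trivial observation that negating the matrix negates the spectrum.
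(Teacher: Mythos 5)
Your proposal is correct and follows essentially the same route as the paper: apply Theorem \ref{matrixmain} to $Z_N-A_N$ with $D=-C$ (diagonalizable since $C=-C^*$, so $p_\xi=1$), solve $1-\ii t_j\,m_{\MP}(z)=0$ via the $\phi=1$ relation $zm^2+zm+1=0$, and negate the spectrum; the paper merely asserts the value of $z_j$ where you actually compute it. One small slip in your parenthetical branch check: in the paper's convention $m_{\MP}$ maps each half-plane to itself, so at $z_0=t_j^2/(1+\ii t_j)$ the imaginary parts of $m$ and $z_0$ have the \emph{same} sign (which is exactly what excludes the second root $-1+\ii/t_j$ of the quadratic), not opposite signs—though your conclusion about the branch, and hence the proof, stands.
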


\begin{proof}
Consider the matrix $Z_N-A_N$. As $\ii C$ is Hermitian, the matrix $D=-C$ from Theorem \ref{matrixmain} does not have any Jordan chains longer than one.
Note that  $-z_j$ is  the solution of $1+\ii t_jm_1(z)=0$.
The claim follows now directly from Theorem \ref{matrixmain}.
\end{proof}

\section{Random matrix pencils and $H$-selfadjoint random matrices}\label{HW}

In this section we will employ the setting of random matrix pencils. 
The theory is aimed on localisation of the spectrum of the products of matrices $H_NX_N$.
Although the linear pencil  appear only in the proof of the main result (Theorem \ref{MainForX_N2}), its role here is crucial.
 In what follows $H_N$ is a nonrandom diagonal matrix
$$
H_N=\diag(c_1\dts c_{n_N})\oplus I_{N-{n_N}},\quad c_1\dts c_{n_N}<0,
$$
and $X_N$ is a generalised Wigner or random sample covariance matrix.
To prove a limit law for the resolvent we need $n_N$ to be a slowly increasing sequence, while to count the number of eigenvalues converging to their limits we need $n_N$ to be constant. Note that unlike in the case of perturbations $X_N+A_N$ considered in Theorem \ref{matrixmain} we do not need to localise the spectrum near the real line, as  the spectrum of $X_N$ is symmetric with respect to the real line and contains at most $n_N$ points in the upper half plane, see e.g. \cite{GLR}. The following theorem explains the behavior of all non-real eigenvalues of $H_NX_N$. It covers the results on locating the nonreal eigenvalues of $H_NX_N$ of \cite{MWPP} and \cite{Wojtylak12b}, where the case $n_N=1$ was considered. In addition, the convergence rate and formula for the resolvent are obtained.

\begin{theorem}\label{MainForX_N2}
Let $n_N \leq \log N$ be a sequence of nonrandom natural numbers and let
$$
H_N=\diag(c_1\dts c_{n_N})\oplus I_{N-{n_N}},
$$
where $(c_j)_j$ is a negative sequence such that the sequence $(c_j^{-1})_j$ is bounded.
 We also assume that
  \begin{itemize}
 \item[(a1.3)] $X_N$ is either a Wigner matrix from Example \ref{Wigner} or a random sample covariance matrix from Example \ref{MP}, so that the resolvent of $X_N-zI_N$ has a limit law $m_{\x}(z) I_N$ on the compact set $\mathbf{T}$ with the rate $\Psi^{\x}_N(z)$, where $\x\in\set{\W,\MP}$, respectively.
\end{itemize}

Then, with high probability, the eigenvalues of the matrix $H_NX_N$ are outside the set
$$
\widetilde{\mathbf{T}}_N:=\set{z\in\mathbf{T}: \left| \frac{c_j}{(c_j-1)z} +m_{\x}(z) \right|^{-1}<N^{\beta}, \text{ for } j=1,2,\dots,n_N },
$$
where $\beta<\alpha<\frac12$.
Furthermore, the resolvent of $H_NX_N-zI_N$ has a  limit law on $\widetilde{\mathbf{T}}_N$
\begin{equation}\label{resM}
\widetilde{M}_N(z)=\diag(g_1(z),\dots, g_{n_N}(z))\oplus m_{\star}(z)I_{N-n_N}, \quad z\in\widetilde{\mathbf{T}}_N,
\end{equation}
with
$$
g_j(z)=m_{\star}(z)\frac{1}{(c_j-1)zm_{\star}(z)+c_j}
$$
and the rate $n_N N^{\alpha}\Psi_N^{\star}(z)$.

If, additionally, $n_N=n$ is constant and if $k_j$ denotes the number of repetitions of $c_j$ in the sequence $c_1,\dots,c_n$ and if, using the notation of Example \ref{MP},
$$
z_{j} :=
 \begin{cases} \frac{-c_j}{\sqrt{1-c_j}}\ii & : \x=\W   \\
\frac{-(c_j-1)^2\gamma_-\gamma_++(2c_j\phi^{-\frac12}+(c_j-1)(\phi^{\frac12}-\phi^{-\frac12}))^2}{2(2c_j\phi^{-\frac12}+(c_j-1)(\phi^{\frac12}-\phi^{-\frac12}))(c_j-1)-(c_j-1)^2(\gamma_++\gamma_-)}
& : \x=\MP\end{cases},\quad  j=1,\dots,n
$$
then, for any $j=1,2,\dots,n$, there are exactly  $k_j$ eigenvalues $
\lambda_{1}^N,\dots,\lambda_{k_j}^N$ of $H_NX_N$
converging to $z_j$ in probability as
$$
|\lambda_{l}^N-z_j|\prec N^{-\frac{1}{2}},\quad l=1,\dots k_j.
$$
\end{theorem}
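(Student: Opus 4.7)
The plan is to reduce to the framework of Theorems \ref{thres} and \ref{spectrum} via the factorisation
\[
H_N X_N - z I_N = H_N\bigl(X_N - z I_N - z B_N\bigr),\qquad B_N := H_N^{-1} - I_N = (C^{-1}-I_{n_N})\oplus 0_{N-n_N},
\]
with $C := \diag(c_1,\dots,c_{n_N})$. Since $H_N$ is invertible, the resolvent set of $H_N X_N$ equals the set of $z$ for which $X_N(z)+A_N(z) := (X_N - z I_N) + (-zB_N)$ is invertible, and $-zB_N = P_N C_N(z) Q_N$ with $P_N = \begin{bmatrix}I_{n_N}\\0\end{bmatrix}$, $Q_N = P_N^*$, $C_N(z) = -z(C^{-1}-I_{n_N})$. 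Assumption (a1) of Theorem \ref{thres} is (a1'''); (a2) follows from $0\notin\mathbf{T}$ and $c_j\neq 1$; (a4) is immediate since $\|P_N\|_2=\|Q_N\|_2=c(P_N)=r(Q_N)=1$; and (a3) holds with any $\alpha<\tfrac12$ thanks to $n_N\leq \log N$ and $\Psi_N^\x\leq\mathcal O(N^{-1/2})$ on the compact set $\mathbf{T}$.

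A direct calculation gives
\[
K_N(z) = C_N(z)^{-1} + Q_N m_\x(z) I_N P_N = \diag\Bigl(\tfrac{c_j}{(c_j-1)z} + m_\x(z)\Bigr)_{j=1}^{n_N},
\]
hence $\|K_N(z)^{-1}\|_2 = \max_j |\tfrac{c_j}{(c_j-1)z}+m_\x(z)|^{-1}$, matching the defining inequality of $\widetilde{\mathbf{T}}_N$. The local limit law $m_\x(z)I_N - m_\x^2(z)P_N K_N(z)^{-1} Q_N$ for $(X_N - zI_N - zB_N)^{-1}$ produced by Theorem \ref{thres} transfers, through right-multiplication by the diagonal matrix $H_N^{-1}$ (bounded in the max norm by $\max(1,\sup_j|c_j^{-1}|)$), to a local limit law for $(H_N X_N - zI_N)^{-1}$; elementary algebra using $(K_N^{-1})_{jj} = (c_j-1)z/(c_j+(c_j-1)zm_\x(z))$ reduces the $j$-th diagonal entry ($j\leq n_N$) to $g_j(z)$ and leaves the remaining block as $m_\x(z)I_{N-n_N}$.

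For the eigenvalue count (when $n_N = n$ is constant) I would invoke Theorem \ref{spectrum}. Assumption (a5') holds because $Q_N M_N(z)P_N = m_\x(z)I_n$ is $N$-independent and analytic. Since $c\mapsto c/(c-1)$ is injective, the scalar
\[
\det K(z) = \prod_{j=1}^n\Bigl(\tfrac{c_j}{(c_j-1)z} + m_\x(z)\Bigr)
\]
has a zero of order exactly $k_j$ at $z_j$, each individual factor $f_j(z):=c_j/((c_j-1)z)+m_\x(z)$ having a simple zero there (a short computation using the implicit equation for $m_\x$, e.g., $m_\W^2+zm_\W+1=0$ in the Wigner case, shows $f_j'(z_j)\neq 0$). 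The explicit value of $z_j$ is obtained by solving $m_\x(z_j)=c_j/((1-c_j)z_j)$: for Wigner, substitution into $m_\W^2+zm_\W+1=0$ yields $m_\W(z_j)^2=1/(c_j-1)$ and hence $z_j = -c_j\ii/\sqrt{1-c_j}$ (the root in the upper half plane), and an analogous algebraic manipulation of the Marchenko--Pastur Stieltjes transform produces the second formula.

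Finally, the improved rate $N^{-1/2}$, strictly stronger than the $N^{-1/(2k_j)}$ returned by a direct application of Theorem \ref{spectrum}, is read off from the shape of $\widetilde{\mathbf{T}}_N$ in the spirit of the remark preceding Section \ref{s3}. Indeed, since each $f_j$ has a simple zero at $z_j$, $|f_j(z)|\geq\const\cdot|z-z_j|$ locally, so any $z$ outside $\widetilde{\mathbf{T}}_N$ in a small fixed neighbourhood of $z_j$ satisfies $|z-z_j|\leq \const\cdot N^{-\beta}$ for every $\beta<\tfrac12$. Combined with the $k_j$-fold count from Theorem \ref{spectrum}, this yields $|\lambda_l^N - z_j|\prec N^{-1/2}$. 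The main technical subtlety is the calibration of $\alpha$ and $\beta$ in Theorem \ref{thres} so that the logarithmic growth $n_N\leq\log N$ is absorbed into the stochastic domination $\prec$ while keeping the rate arbitrarily close to $N^{-1/2}$.
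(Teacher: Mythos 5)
Your proposal is correct and follows essentially the same route as the paper: the identical factorisation $H_NX_N-zI_N=H_N(X_N-zI_N+z\,\diag(1-c_j^{-1})\oplus 0)$, the same application of Theorem \ref{thres} with $\alpha<\tfrac12$ and of Theorem \ref{spectrum} for the count, the same computation of $K(z)$ and of $z_j$ via $m_{\W}+1/m_{\W}+z=0$, and the same bootstrap of the rate to $N^{-1/2}$ from the inclusion $\widetilde{\mathbf{T}}_N\supseteq\set{z:|z-z_j|>N^{-\beta'}}$.
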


\begin{proof}
 First note that for $z\in\mathbf{T}$ we have
\begin{equation}\label{H3}
H_NX_N-zI_N=H_N(X_N-zI_N+zP_NC_NQ_N),
\end{equation}
where
$$
C_N=\diag(1-c^{-1}_1,\dots,1-c^{-1}_{n_N}),\quad P_N=\matp{I_{n_N}\\ 0_{N-n_N,n_N}},\quad  Q_N=P_N^*.
$$
Note that, by \eqref{ratesup2} and \eqref{ratesup2MP}, the polynomials $X_N(z)=X_N-zI_N$, $A_N(z)=zP_NC_NQ_N$ satisfy the assumptions of Theorem \ref{thres} with any
$\alpha<\frac12$.

Hence, the resolvent of $H_N(X_N-zI_N+zP_NC_NQ_N)$ has a limit law
\begin{eqnarray*}
\widetilde{M}(z)&=& \Big(m_{\x}(z)I_N - m_{\x}(z)P_N\Big( (zC_N)^{-1}+Q_Nm_{\x}(z)I_NP_N \Big)^{-1}Q_Nm_{\x}(z)\Big) H_N^{-1}\\
&=& m_{\x}(z)\diag(c^{-1}_1\dts c^{-1}_{n_N})\oplus I_{N-{n_N}} \\
- m_{\x}(z)^2\!\!\!\!\!& \cdot \null&\!\!\!\!\!\left(\Big( \diag\big(\frac{1}{z(1-c^{-1}_1)},\dots,\frac{1}{z(1-c^{-1}_{n_N})}\big)+m_{\x}(z)I_{n_N} \Big)^{-1}\oplus 0_{N-n_N}\right)H^{-1}_N\\
&=& \diag(g_1(z),\dots, g_{n_N}(z))\oplus m_{\x}(z)I_{N-n_N}
\end{eqnarray*}
on the set $\widetilde{\mathbf T}_N$ with $\beta<\alpha<\frac12$. The rate of this convergence is $n_N N^{\alpha}\Psi_N^{\x}(z)$,
due to the fact that $H_N^{-1}$ is a diagonal matrix with bounded entries.
Finally, note that $n_N N^{\alpha}\Psi^x_N(z)$ converges to zero pointwise in $z$.

Now let us prove the statement concerning the eigenvalues. By  \eqref{H3} the eigenvalues of $H_NX_N$ are the eigenvalues of the linear pencil $X_N-zI_N+zP_NC_NQ_N$. Define $K(z)$  as in Theorems \ref{thres} and \ref{spectrum}:
$$
K(z):=(Cz)^{-1}+m_{\x}(z) I_n.
$$
As the matrix $C$ is diagonal we have that $\det K(z)=0$ is equivalent to
 \begin{equation}\label{ewig}
 \frac{c_j}{c_j-1}\frac{1}{z}+m_{\x}(z)=0,
 \end{equation}
 for some $ j\in\set{1,\dots n}$. Consequently, the points $z_j$, $j=1,\dots, n$  are precisely the zeros of $\det K(z)$.

If $X_N$ is a  Wigner matrix, then using the well known equality
 \begin{equation} \label{m+1/m+z=0}
m_{\W}(z)+\frac{1}{m_{\W}(z)}+z=0
\end{equation}
it is easy to show that the equation \eqref{ewig} has for each $j=1,\dots n$
two complex solutions
$$
z_j^\pm = \pm \frac{c_j}{\sqrt{1-c_j}}i,
 $$
 let $z_j=z_j^-$. If $X_N$ is a random sample covariance matrix then direct computations give the formula for $z_j$. By Theorem \ref{spectrum}, for each $z_j$, there are $k_j$ eigenvalues of $X_N$ converging to $z_j$ as
 \begin{equation}\label{est1}
|\lambda_{l}^N-z_{j}|\prec N^{-\alpha},\quad l=1,\dots k_j,
\end{equation}
 with some $\alpha>0$.
By the first part of the theorem, with high probability, the eigenvalues of $H_NX_N$ are outside  the set
 \begin{eqnarray*}
 \widetilde{\mathbf{T}}_N&=&\set{z\in\mathbf{T}: \left| \frac{c_j}{(c_j-1)z} +m_{\x}(z) \right|^{-1}<N^{\beta}, \text{ for } j=1,2,\dots,n}\\
 &\supseteq&\set{z\in\mathbf{T}: | z-z_j| > N^{-\beta'}, \text{ for } j=1,2,\dots,n}
 \end{eqnarray*}
where $\beta'<\beta<\frac12$. Hence, $\alpha$ in \eqref{est1} can be chosen as arbitrary $\beta'<\frac12$. Hence by the definition of stochastic domination (see Definition \ref{stochdom}), equation \eqref{est1} holds with $\alpha=\frac12$ as well.

\end{proof}

\begin{remark}
  Let us note two facts about the formula for $z_j$ in Theorem \ref{MainForX_N2} above.

  In the Wigner matrix case the point $z_j$ is in the upper half-plane and its complex conjugate is also a limit point of $k_j$ eigenvalues of $H_NX_N$ due to the symmetry of spectrum of $H_NX_N$ with respect to the real line.

  Further, in the random sample covariance matrix case it holds that $z_j<0$. Further, if the underlying matrix is a square matrix, then the formula simplifies to
  $$
  z_j=\frac{c_j^2}{c_j-1}.
  $$
  \end{remark}

\begin{example}
Let us consider $H=\diag(-1,-2,-2,1,1,\dots,1)\in \Comp^{N\times N}$.
The spectrum of the matrix $X_N=H_NW_N$ is symmetric to the real line and there are three pairs of eigenvalues which do not lie on the real line. The rest of the spectrum is real.
By Theorem \ref{MainForX_N2} the resolvent of $X_N$ converges in probability to
$$
\widetilde{M}(z)=\diag\Big(\frac{-m_{\W}(z)}{2zm_{\W}(z)+1},\frac{-m_{\W}(z)}{3zm_{\W}(z)+2},\frac{-m_{\W}(z)}{3zm_{\W}(z)+2},1,\dots,1 \Big).
$$
 Moreover, one eigenvalue of $X_N$  converges in probability to $z_1=\frac{\sqrt{2}}{2}\ii$ and two eigenvalues converge in probability to $z_2=\frac{2\sqrt{3}}{3}\ii$.

\end{example}

We conclude the section with a different type of a random pencil.

\begin{remark}\label{eUU}
We recall the method of detecting damped oscillations in a noisy signal via Pad\'e approximations of the Z-transform of the signal, proposed in \cite{bessis2009universal}. The method has found several practical applications \cite{practical1,practical2,practical3,practical4}, its numerical analysis can be found in \cite{bessis2009universal}. Here finding the  limit law (if it exists) for the resolvent of  the pencil $zU_0-U_1$, where
$$
  U_0=\matp{  s_0 &   s_1 & \dots &   s_{n-1}\\
      s_1 && \iddots &   s_{n}\\ \vdots&\iddots&\iddots&\vdots \\   s_{n-1} &   s_{n}&\dots  &   s_{2n-2}},\quad U_1=\matp{  s_1 &   s_2 & \dots &   s_{n}\\
      s_1 && \iddots &   s_{n}\\ \vdots&\iddots&\iddots&\vdots \\   s_n &   s_{n+1}&\dots  &   s_{2n-1}},
$$
and  $s=s_0,\dots s_{2n-1}$ is a white noise, would substantially contribute to the signal analysis, via Theorem \ref{thres} above, see \cite{Barone2005,Barone2012} for details. The spectral properties of Hankel matrices were studied e.g. in \cite{Byrc}. The investigation of the pencil $zU_0-U_1$ is left for subsequent papers.
\end{remark}

\section{Analysis of some random quadratic matrix polynomials}\label{spoly}

In the present section we will consider the spectra of matrix polynomials of the form
\begin{equation}\label{polyform}
X_N- p(z)I_N + q(z)u_N u_N^*,
\end{equation}
where $ X_N$ is either a Wigner or a random sample covariance matrix and $u_N$ is some deterministic vector and $p(z)$ and $q(z)$ are some (scalar-valued) polynomials and polynomials of type
 $$
  z^2(C_n\oplus0_{N-n}+I_N)+zX_N+D_n\oplus0_{N-n},
$$
where $C_n,D_n\in\Comp^{n\times n}$ are diagonal deterministic matrices.
This we see as a step forward in a systematic study of polynomials with random entries.
 The problem  of localising the spectrum of \eqref{polyform} is a clear extension of the usual perturbation problem for random matrices, see e.g. \cite{BGRnew,BCnew,cap1,FP,Knew,MMRR1,MMRR2,MMW2,Pizzo,Tnew}. Indeed, the latter problem can be seen as studying the pencil
$X_N- zI_N + u_N u_N^*$. In Example \ref{polyex} below we will demonstrate an essential difference between these two problems. The general case, i.e. following the setting of Theorem \ref{thres}, is as yet unclear and requires developing new methods for estimating the expression $\norm{K(z)^{-1}}_2$ appearing therein.

Note that matrix polynomials of  type $A- p(z)I_N + q(z)u u^*$ ($A\in\Comp^{n\times n}$, $u\in\Comp^n$) appear in many practical problems connected with modelling, cf. \cite{BHMST}.

\begin{theorem}\label{coruuu}
Let
$$
C(z)=q(z)  \in\Comp^{1\times 1}[z],\quad A_N(z):= q(z)u_Nu_N^* \in\Comp^{N\times N}[z],
$$
be deterministic  matrix polynomials, where  $u_N\in\Comp^{N}$, $N=1,2,\dots$.
Let  $X_N(z)=X_N- p(z) I_N \in\Comp^{N\times N}[z]$ be a random matrix polynomial, and let neither $p(z)$ nor $q(z)$ depend on $N$. We assume that

 \begin{itemize}
 \item[(a1.2)] $X_N$ is either a  Wigner matrix from Example \ref{Wigner} or a random sample covariance matrix from Example \ref{MP}, so that the resolvent of $X_N$ has a local limit law $m_{\x}(z) I_N$ on the family of sets $\mathbf S^{\x}_{N,\omega}$ with the rate $\Psi^{\x}_N(z)$, where $\x\in\set{\W,\MP}$, respectively, and let $\mathbf{T}$ be a compact set that does not intersect the real line.
\item[(a2.3)] $p(z)$ and $q(z)$ are fixed nonzero polynomials,
\item [(a4.3)] $u_N$ is a deterministic vector of norm one, having at most $n$ nonzero entries, where $n$ is fixed and independent from $N$.
 \end{itemize}

Then the eigenvalues of $X_N(z)+A_N(z)$ are with high probability outside the set
$$
\widetilde{\mathbf{S}}_{N,\omega}^{\x}=\set{z\in\Comp: p(z)\in\mathbf{S}_{N,\omega}^{\x},\ | m_{\x}(p(z))+q(z)^{-1}| >N^{-\beta\omega}}.
$$
and
\begin{equation}\label{Tmatrix1}
\widetilde{\mathbf{T}}_N:=  \set{z\in\Comp: p(z)\in \mathbf  T : | m_{\x}(p(z))+q(z)^{-1}| > N^{-\beta} } ,\end{equation}
 where  $\beta<\alpha<\frac12$.
The resolvent of the polynomial $X_N(z)+A_N(z)$ has on $\widetilde{\mathbf{ S}}^{\x}_{N,\omega}$ and $\widetilde{\mathbf T}_N$ the following  limit law
$$
\widetilde{M}_N(z)=m_{\x}(p(z))I_N -\frac{m_{\x}(p(z))^2}{m_{\x}(p(z))+q(z)^{-1}}u_Nu_N^*,
$$
with the rates
$$
N^\frac\omega2 \Psi^{\x}_N(z)\quad \text{ and } \quad   N^\alpha \Psi^{\x}_N(z),
$$
respectively.

Furthermore, for each solution $z_0$ with $p(z_0)\in\Comp\setminus\Real$ of the equation $m_{\x}(p(z))+q(z)^{-1}=0$ there exists  eigenvalues $\lambda^N_j$, $j=1,\dots,k$, where $k\geq 1$, of $X_N(z)$ converging to $z_0$ as
$$
|z_0-\lambda_j^N|\prec N^{-1/2},\quad j=1,\dots, k.
$$
\end{theorem}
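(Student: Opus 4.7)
The plan is to recognise the polynomial \eqref{polyform} as a deterministic rank-one perturbation $A_N(z)=P_NC(z)Q_N$ of the random polynomial $X_N-p(z)I_N$, with $P_N=u_N$, $Q_N=u_N^*$, and the scalar $C(z)=q(z)$, so that $n_N=1$. The first step is to produce a local limit law for $X_N-p(z)I_N$ by composition: substituting $w=p(z)$ into the law of $X_N-wI_N$ from (a1'') yields the limit $m_{\star}(p(z))I_N$ on $\{z:p(z)\in\mathbf{S}_{N,\omega}^{\star}\}$ (or on $\mathbf{T}$) with rate $\Psi_N^{\star}(p(z))$. Verification of (a4) is immediate: $\|P_N\|_2=\|Q_N\|_2=\|u_N\|_2=1$, while $c(P_N),r(Q_N)\leq n$ by (a4'''). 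Assumption (a2) reduces to $q(z)\neq 0$, which one incorporates into the defining set; (a3) is covered by \eqref{ratesup1}--\eqref{ratesup2MP} together with $n_N=1$.

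Next I would compute the relevant objects. Because $M_N(z)$ is a scalar multiple of $I_N$ and $\|u_N\|=1$, the matrix $K_N(z)=C(z)^{-1}+Q_N M_N(z) P_N$ collapses to the scalar
\[
K_N(z)=q(z)^{-1}+m_{\star}(p(z)),
\]
so the condition $\|K_N(z)^{-1}\|_2<N^{\beta\omega}$ of Theorem~\ref{thres} becomes exactly the defining inequality of $\widetilde{\mathbf{S}}_{N,\omega}^{\star}$ (respectively $\widetilde{\mathbf{T}}_N$). Substituting into \eqref{resX} and using $Q_NM_N(z)=m_\star(p(z))u_N^*$ reproduces the stated formula for $\widetilde M_N(z)$. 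The rate from Theorem~\ref{thres} simplifies, because $n_N=1$ and $m_{\star}$ is bounded away from the real line, to the factors $N^{\omega/2}\Psi_N^{\star}(p(z))$ or $N^{\beta}\Psi_N^{\star}(p(z))$ claimed in the statement.

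For the last assertion I would invoke Theorem~\ref{spectrum}. Assumption (a5') holds because $Q_NM_N(z)P_N=m_{\star}(p(z))$ does not depend on $N$. At a point $z_0$ with $p(z_0)\notin\Real$ where $\det K(z)=K(z)=q(z)^{-1}+m_{\star}(p(z))$ vanishes, this scalar analytic function has some finite order of zero $k\geq 1$. Theorem~\ref{spectrum} then produces exactly $k$ eigenvalues of $X_N(z)+A_N(z)$ converging to $z_0$. To recover the rate quoted in the conclusion, I would sharpen the estimate by using Theorem~\ref{thres} on $\widetilde{\mathbf{T}}_N$: near $z_0$ one has $|K(z)|\asymp|z-z_0|^k$, so the connected component of $\mathbf{T}\setminus\widetilde{\mathbf{T}}_N$ around $z_0$ is (up to a harmless constant) a disc of radius $N^{-\beta/k}$ for any $\beta<\tfrac{1}{2}$; since all $k$ eigenvalues promised by Theorem~\ref{spectrum} must sit in that disc, the claim follows (with rate $N^{-1/(2k)}$, which coincides with $N^{-1/2}$ in the generic case $k=1$, exactly parallel to the Jordan-block factor $p_\xi$ in Theorem~\ref{matrixmain}).

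The main obstacle I expect is the careful bookkeeping of the composition with $p(z)$: one must verify that the rate $\Psi_N^{\star}(p(z))$ remains controlled on the relevant $z$-set, that $p^{-1}(\mathbf{S}_{N,\omega}^{\star})$ is an admissible family of sets in the sense of Definition~\ref{Defiso}, and that the scalar $K(z)$ is genuinely analytic and not $N$-dependent at the candidate limit point $z_0$ (so that Theorem~\ref{spectrum} applies). The algebra involving $K_N$, $\widetilde M_N$, and the rate is essentially cosmetic thanks to the rank-one structure, but the transition from the $w$-variable of the Wigner/Marchenko--Pastur law to the $z$-variable via $p$ is the place where care is needed.
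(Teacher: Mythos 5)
Your proposal is correct and follows essentially the same route as the paper: the paper's proof consists exactly of the observations you make, namely that $X_N-p(z)I_N$ has the local limit law $m_{\star}(p(z))I_N$, that with $P_N=u_N$, $Q_N=u_N^*$, $C(z)=q(z)$ one gets the scalar $K(z)=q(z)^{-1}+m_{\star}(p(z))$ so that Theorem \ref{thres} yields the resolvent statement, and that the eigenvalue claim follows from Theorem \ref{spectrum} combined with the shape of $\widetilde{\mathbf{T}}_N$, as in the proofs of Theorems \ref{matrixmain} and \ref{MainForX_N2}; your verification of (a2)--(a5') and of the composition with $p$ is just a more explicit version of what the paper leaves implicit. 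The one point of divergence is the final rate: your mechanism gives $|\lambda_j^N-z_0|\prec N^{-1/(2k)}$ with $k$ the order of the zero of $K$ at $z_0$, and you rightly note this matches the stated $N^{-1/2}$ only for a simple zero. That is not a defect of your argument relative to the paper: the paper's own two-line proof produces the same $N^{-1/(2k)}$ bound (here, unlike in Theorem \ref{matrixmain}, there is no Jordan-block exponent $p_\xi$ smaller than the multiplicity to improve it), so the $N^{-1/2}$ in the statement is only justified in the generic simple-zero case, an imprecision of the statement rather than of your proof.
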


\begin{proof} For the proof we note that the assumptions of Theorem \ref{thres} are satisfied, $X_N(z)$ has a limit law $m_{\x}(p(z))I_N$
and that
$$
K(z)= m_{\x}(p(z))+q(z)^{-1},
$$
and so the first part of the claim follows directly.

The statement concerning the convergence of eigenvalues follows from Theorem \ref{spectrum} and from the form of the set $\widetilde{\mathbf{T}}_N$, cf. the proofs of Theorems \ref{matrixmain} and \ref{MainForX_N2}.

\end{proof}

\begin{example}\label{polyex}
We present an example showing how the techniques above may be useful in localising eigenvalues of second order matrix polynomials.
Consider a particular matrix polynomial \eqref{polyform}:
\begin{equation}\label{Polyform}
P(z)=-4\pi^2\left( I_N - \frac12 e_Ne_N^\top\right)z^2 + {2\pi }\cdot e_Ne_N^\top z + 2I_N-e_Ne_N^\top - W_N,
\end{equation}
where $W_N$ is the real  Wigner matrix. 
Note that a similar matrix can be found in \cite{BHMST} as Problem \verb-acoustic_wave_1d- (after substitution $z=\frac{\lambda}N$), but with the Wigner matrix replaced by
$$
 \matp{   &-1&& \\ -1 &&\ddots  & \\ &\ddots&&-1\\&&-1&}.
$$
The matrix polynomial  in \eqref{Polyform} is real and symmetric, hence its spectrum is symmetric with respect to the real line.
Let us note that the spectrum is  a priori not localised  on the real and imaginary axis, for small $N$ we can have eigenvalues with relatively large both real and imaginary parts. However, these eigenvalues will converge to the real and imaginary axis as $N$ grows.
To see this, first note that the sets  $\tilde{S}^W_{N,\omega} \subset p^{-1}(S^W_{N,\omega})$ are contained in the first and third quadrant of the plane. However, due to the symmetry of the spectrum, we may extend the set to
\begin{equation}\label{SSS}
\set{z\in\Comp: z\in \tilde{S}^W_{N,\omega} \text{ or }\bar z\in \tilde{S}^W_{N,\omega}},
\end{equation}
 which lies in all four quadrants of the complex plane. The spectrum of $P(\lambda)$ is located with high probability in the complement of  the  set \eqref{SSS}.  A sample set \eqref{SSS} is plotted in Figure \ref{F5}. Note that the complement of \eqref{SSS} contains both the real and the imaginary axis with some neighbourhood, which is not clearly seen in  the picture. Hence, for large $N$, the spectrum is either (approximately) on the real or imaginary axis. Furthermore, the real spectrum concentrates on  $
p^{-1}([-2,2])=\left[-\frac{1}{\pi},\frac{1}{\pi}\right]$ and there is also  one  real eigenvalue (near $z=0.5$), which converges to a real solution of
\begin{equation}\label{mpq}
m_{\W}(4\pi^2z^2-2)+\frac1{2\pi^2z^2+ {2\pi\ii}\ z -1}=0
\end{equation}
 (we extend the function $m_{\W}(z)$ onto the real line as $m_{\W}(x)=\lim_{y\downarrow 0}m(x+\ii y)$, similarly for the matrix $A^{(5)}$ in Example \ref{Wignerplusi2}).

Due to its role in applications, computation of spectra of quadratic polynomials  is currently an important task, see e.g.  \cite{BHMST,Mac22006,MT,acoustic,Tisseur}.
The usual procedure is a linearisation (see \cite{Mac2006,linearization}), which,  however,  has some limitations  \cite{higam}.
Above we obtained a family of matrix polynomials (i.e. one coefficient is a random matrix) for which  we are able to  control the real part of the non-real eigenvalues. This  property can be useful e.g. in testing particular numerical  algorithms, by plotting maximum of $|\RE \lambda|$, over all non-real eigenvalues $\lambda$. If the algorithm works properly then this quantity should converge to zero as approximately $N^{-1}$.
A preliminary picture in \texttt{matlab} (Figure \ref{F6}) of maximum of $|\RE \lambda|$, over all non-real eigenvalues $\lambda$, does not reveal any numerical anomalies for $N\leq 10^3$ and the upper bound of order $N^{-1}$ is visible.

\begin{figure}
\includegraphics[width=250pt]{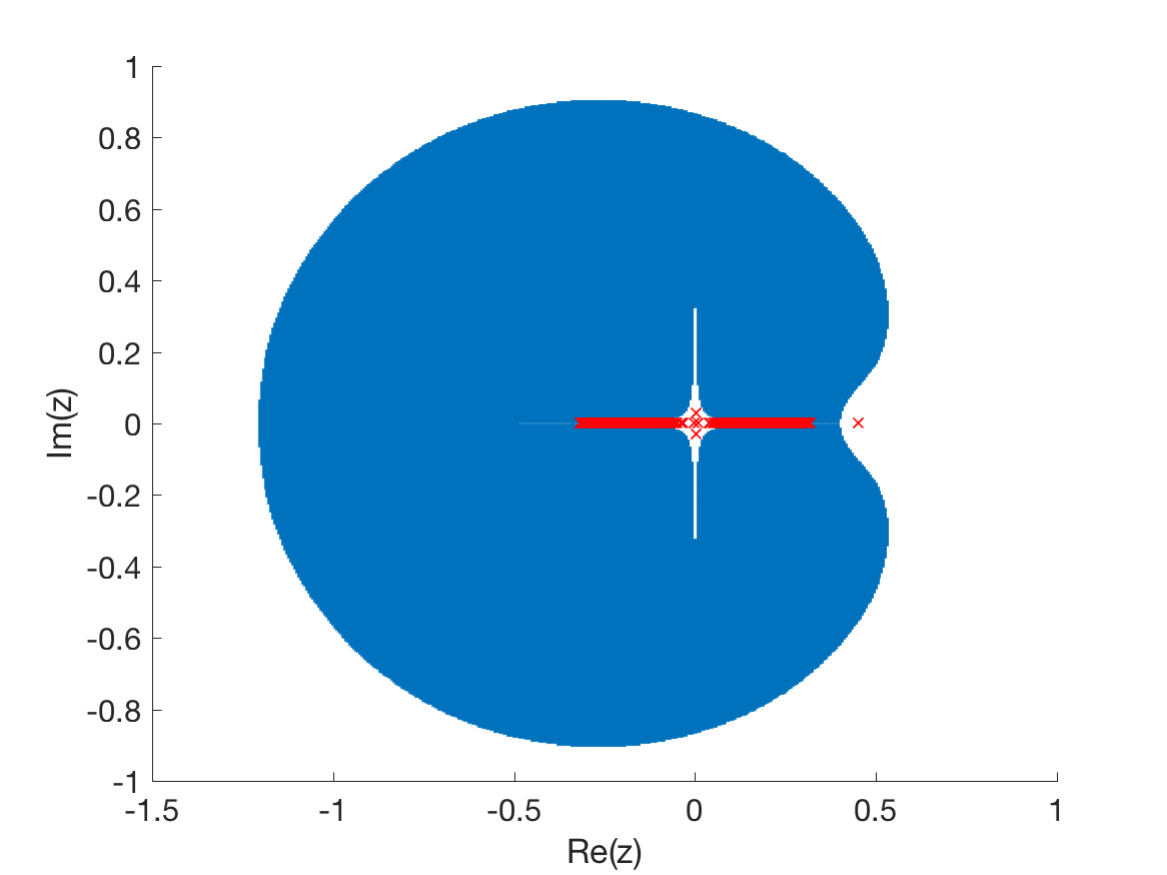}
\caption{ The spectrum of the quadratic random polynomial  \eqref{Polyform} with $N=500$ (red crosses) and the set \eqref{SSS} (blue).}\label{F5}
\end{figure}

\begin{figure}
\includegraphics[width=250pt]{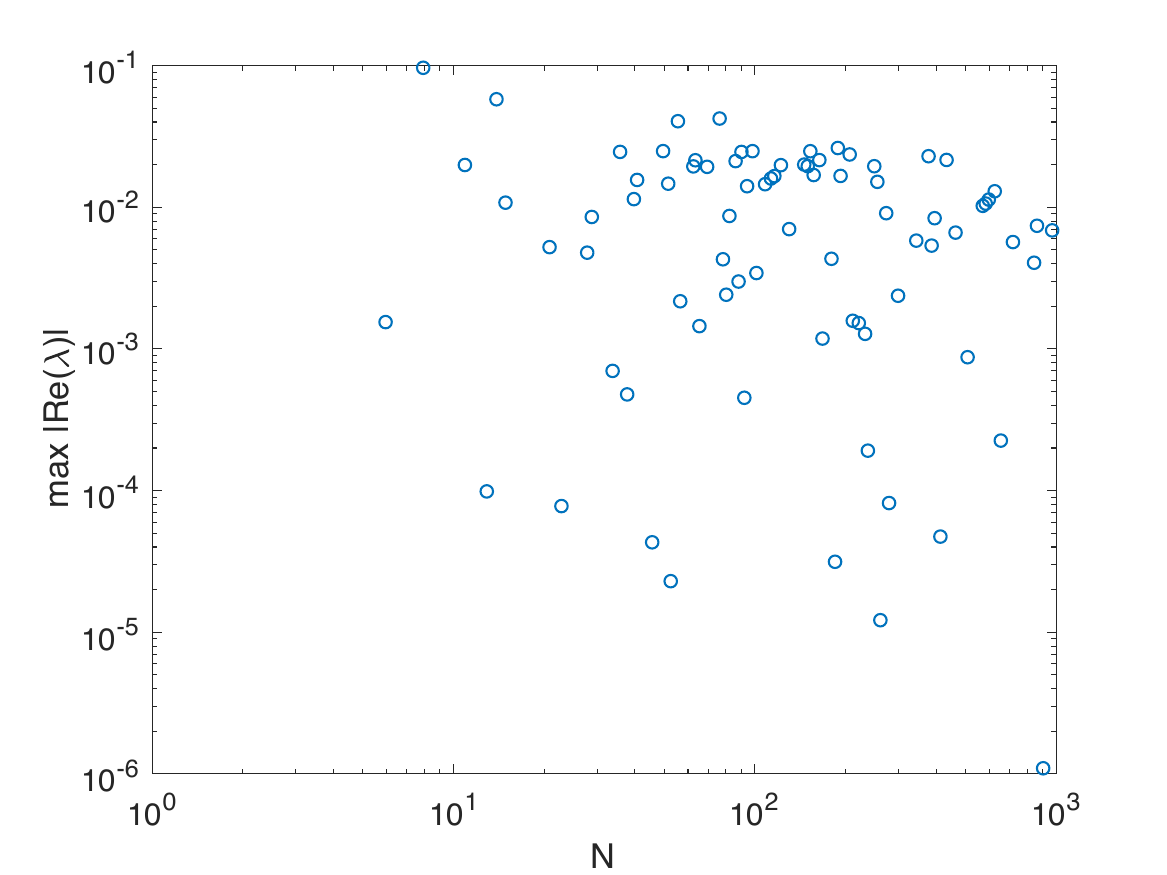}
\caption{Maximum of $|\RE \lambda|$, over all non-real eigenvalues $\lambda$ of the polynomial \eqref{Polyform}.}\label{F6}
\end{figure}
\end{example}
We present yet another possible application of the main results. Again, investigating this particular polynomial is motivated by examples from
\cite{BHMST}.

\begin{theorem}\label{2order}
Let
$$
C(z):=  z^2 \matp{c_1&&\\ &\ddots\\ && c_n} + \matp{d_1&&\\ &\ddots\\ && d_n}\in\Comp^{n\times n}[z],
$$ $$A_N(z):=C(z)\oplus 0_{N-n} \in\Comp^{N\times N}[z],
$$
be deterministic  matrix polynomials, where $n\geq1$ and  $c_1,\dots,c_n,d_1,\dots,d_n\in\Comp$ are fixed.
Let  $X_N(z)=z^2I+zX_N \in\Comp^{N\times N}[z]$ be a random matrix polynomial. We assume that

 \begin{itemize}
 \item[(a1.4)] $X_N$ is either a  Wigner matrix from Example \ref{Wigner} or a random sample covariance matrix from Example \ref{MP}, so that the resolvent of $z^2I_N+zX_N$ has a  limit law $z^{-1}m_{\x}(z) I_N$ on the family of sets $\mathbf S^{\x}_{N,\omega}$ with the rate $\Psi^{\x}_N(z)$, where $\x\in\set{\W,\MP}$, respectively, and let $\mathbf{T}$ be a compact set that does not intersect the real line.
\item[(a2.4)] $z^2c_i + d_i\neq 0$ for $z\in\mathbf S^{\x}_{N,\omega}$ or $z\in\mathbf{T}$, respectively, $i=1,\dots,n$, e.g. $c_i,d_i>0,$ for $i=1,\dots,n$.
 \end{itemize}

Then the eigenvalues of
$
X_N(z)+A_N(z)
$
are with high probability outside the set
$$
\widetilde{\mathbf{S}}_{N,\omega}^{\x}=\set{z\in\mathbf{S}_{N,\omega}^{\x}: \min_{i=1,\dots,n}  \left|   \frac1{z^2c_i+d_i}+z^{-1}m_{\x}(z)    \right|  >N^{-\beta\omega}}.
$$
and
\begin{equation}\label{Tmatrix1}
\widetilde{\mathbf{T}}_N:=  \set{z\in\mathbf  T : \min_{i=1,\dots,n}  \left|   \frac1{z^2c_i+d_i}+z^{-1}m_{\x}(z)    \right|    > N^{-\beta} } ,\end{equation}
 where $\beta<\alpha<\frac12$.
The resolvent of the polynomial $X_N(z)+A_N(z)$ has on $\widetilde{\mathbf{ S}}^{\x}_{N,\omega}$ and $\widetilde{\mathbf T}_N$ the following limit law
$$
\widetilde{M}_N(z)=z^{-1}m_{\x}(z)I_N -\matp{f_1&&\\ &\ddots\\ && f_n} ,
$$
$$
f_i=\frac{z^{-2} m_{\x}(z)^2}{(z^2c_i+d_i)^{-1}+z^{-1}m_{\x}(z)},\quad i=1,\dots n,
$$
with the rates
$$
N^\frac\omega2 |z|^{-2} \Psi^{\x}_N(z)\quad \text{ and } \quad   N^\alpha \Psi^{\x}_N(z),
$$
respectively.

Furthermore, for each solution $z_0$ with $z_0\in\Comp\setminus\Real$ of the equation $(z^2c_i+d_i)^{-1}+z^{-1}m_{\x}(z)  =0$ there exists  eigenvalues $\lambda^N_j$, $j=1,\dots,k$, where $k\geq 1$, of $X_N(z)$ converging to $z_0$ as
$$
|z_0-\lambda_j^N|\prec N^{-1/2},\quad j=1,\dots, k.
$$
\end{theorem}

\begin{proof} First note that the resolvent of $z^2I_N+zX_N$ has indeed the limit law $z^{-1}m_{\x}(z)I_N$ on the same sets and with the same convergence rate as  $X_N-zI_N$. Now the proof becomes another application of Theorem \ref{thres} with
$$
K(z)=\matp{ (z^2c_1+d_1)^{-1}+z^{-1}m_{\x}(z)\\ &\ddots\\ && (z^2c_n+d_n)^{-1}+z^{-1}m_{\x}(z)    }  ,
$$
 see  Theorem \ref{coruuu} for details.
We highlight that  the factor
$\norm{M(z)}_2^2$ in the formula for $\widetilde{\Psi}(z)$ in Theorem \ref{thres} cannot be ignored here (in previous applications introducing this factor was not necessary as $m_{\x}(z)$ is bounded in the upper half plane). However, in the current situation we have $\norm{M_N(z)}_2^2=\const |z|^{-2}$ in the case $\mathbf{S}_N^{\x}$ and $\norm{M_N(z)}_2^2=\const$ in the case $\mathbf{T}$.
\end{proof}

\color{black}

\section*{Acknowledgment}
The authors are indebted to Antti Knowles and Volker Mehrmann for inspiring discussions and helpful remarks.
The referee's remarks led to an essentially improved version of the manuscript, for which we express our gratitude.

\end{document}